\newtheorem{theorem}{Theorem}[section]
\newtheorem{rqm}[theorem]{Remark}
\newtheorem{assump}[theorem]{Assumption}
\newcommand{\ud}{\mathrm{d}}
\newenvironment{proof}[1][Proof]{\textbf{#1.} }{\rule{0.5em}{0.5em}}
\journal{Applied Mathematics and Computation}
\begin{document}

\begin{frontmatter}

\title{Estimation for seasonal fractional ARIMA with stable innovations via the empirical characteristic function method}


\author{Mor Ndongo $^{\sharp}$, Abdou K\^{a} Diongue $^{\sharp}$, Aliou Diop $^{\sharp}$\footnote{Corresponding author.Tel: +221 77 529 96 93, Fax: +221 33 961 53 38} Simplice Dossou-Gb\'et\'e $^{\S}$}

\address{$^{\sharp}$LERSTAD, UFR de Sciences Appliqu?es et de Technologie, BP 234, Universit\'{e} Gaston Berger, Saint-Louis, S\'en\'egal.
morndongo2000@yahoo.fr, abdou.diongue@ugb.edu.sn, aliou.diop@ugb.edu.sn.}

\address{$^{\S}$ LMA UMR CNRS 5142, BP 576, Universit\'e de Pau et des Pays de l'Adour, France. simplice.dossou-gbete@univ-pau.fr.}

\begin{abstract}
Maximum likelihood methods, while widely used, may be non-robust due to disagreement between the assumptions upon which the
models are based and the true density probability distribution of observed data. Because the Empirical Characteristic Function (ECF)
is the Fourier transform of the empirical distribution function, it retains all the information in the sample but can overcome
difficulties arising from the likelihood. This paper discusses, the ECF method proposed by Knight and Yu (2002), to estimate simultaneously the parameters for stable seasonal fractional
ARIMA processes. Under some assumptions, we show that the resulting estimators are consistent and asymptotically normally distributed.
For comparison purpose, we consider also a Two-Step Method (TSM) including in the first step the MCMC Whittle method developed by Ndongo {\it et al.} (2010), and in the second the MLE method introduced by Alvarez and Olivares (2005), to estimate the innovation parameters. The performance of the two methods is discussed under different parameter settings, using a Monte Carlo simulation.

\end{abstract}

\begin{keyword}
Seasonal Fractional ARIMA, Stable distributions, ECF estimate, Whittle estimate, Markov Chains Monte Carlo, Two-Step Method.
\end{keyword}

\end{frontmatter}
\section{Introduction}\noindent
Seasonal Fractional ARIMA time series with stable innovations were introduced by Diongue {\it et al.} \cite{DDM}. Since it is a direct generalization of the fractional ARIMA model of Kokoszka and Taqqu \cite{KT1995}, it allows one to take into account three stylized facts: long range dependence, seasonality and high variability, often encountered in financial data. The use of processes with infinite variance has received a great deal of interest in the statistical
literature as noticed in Brockwell and Davis \cite{BD2006}. Examples where such models appear to be appropriate have been found by Stuck and Kleiner \cite{SK1974}, who considered telephone signals, and Fama \cite{Fama}, who modeled stock market prices.\\

\noindent
To solve the parameter estimation problem of the stable ARFISMA processes, Ndongo {\it et al.} \cite{NDDS} propose several estimation procedures: the semiparametric method developed by Reisen {\it et al.} \cite{RRP1}, the classical Whittle estimate and the MCMC Whittle method which is based on the evaluation of the Whittle likelihood function using the Markov Chains Monte Carlo method (MCMC, in short). They study the behavior of these methods through Monte Carlo simulations, and the results show, in general, that the MCMC Whittle method is the best one. Unfortunately, this method only estimates the long-memory and short memory parameters. Moreover, for inference purposes, estimation of the innovation parameter is required. The usual response to such difficulties is that the density functions of the processes cannot be written in a closed form in the sense that it is not expressible in terms of known elementary functions. As a result, ML estimation is often very difficult (for detailed discussion, see Calder and Davis \cite{CD}). Estimation of the stable ARFISMA model via alternative methods such as Quasi-Maximum Likelihood (QML) method and Generalized Method of Moments (GMM) also presents difficulties. For example, QML is infeasible because the variance of the error term may be infinite. For GMM care must be taken when choosing moment conditions because the stable distribution does not have a finite absolute moment of order higher than the tail index $\alpha$. Consequently, another alternative is to use the Empirical Characteristic Function (ECF) method.
 \\

\noindent
A main objective of this paper is to estimate simultaneously the parameters of the symmetric $\alpha$-stable ARFISMA processes using the Empirical Characteristic Function (ECF) method. The asymptotic properties of the ECF estimators are established under some assumptions. Monte Carlo simulations are performed to study the finite sample properties of the ECF method. For comparison purpose, we consider also a Two-Step Method (TSM).\\

\noindent
The paper is organized as follows. In Section \ref{Sec.2}, we present the class of seasonal fractionally integrated processes with stable innovations. Section \ref{Sec.3} addresses the ECF method and the asymptotic properties. Section \ref{secMCMCW} reviews the Two-Step Method. Section \ref{Sec.5} illustrates the ECF procedure and compares it to the TSM in a Monte Carlo study. Section \ref{Sec.6} concludes.
\section{Model}\label{Sec.2}
\subsection{Stable distributions}
\noindent In this section, we summarize the relevant facts associated with the stable distributions and refer the reader to Samorodnitsky and Taqqu (\cite{SamTaqqu}) for a detailed statistical description. There are several ways of defining the stable distributions. The $\alpha$-stable distribution requires four parameters for complete description: an index of stability $\alpha\in (0,\ 2]$ also called the tail index, tail exponent or characteristic exponent, a skewness parameter $\beta\in [-1,\ 1]$, a scale parameter $\sigma>0$ and a location parameter $\mu\in\mathbb{R}$. The tail exponent $\alpha$ determines the rate at which the tails of the
 distribution tape off. When $\alpha=2$, the Gaussian distribution results. When $\alpha < 2$, the variance is infinite and the
 tails are asymptotically equivalent to Pareto law, i.e. they exhibit a power-law behavior. When $\alpha > 1$, the mean of the distribution
 exists and is equal to $\mu$. When the skewness parameter $\beta$ is positive, the distribution is skewed to the right and when
 it is negative, it is skewed to the left. When $\beta = 0$, the distribution is symmetric about $\mu$. As $\alpha$ approaches 2,
 $\beta$ loses its effect and the distribution approaches the Gaussian distribution regardless of $\beta$. The last two parameters,
 $\sigma$ and $\mu$ are the usual scale and location parameters, i.e. $\sigma$ determines the width and $\mu$ the shift of the mode
 of the density. In general, it will be convenient to define the $\alpha$-stable random variables in terms of their characteristic functions. A random variable $X$ is said $\alpha$-stable, denoted $S_{\alpha,\beta}(\mu,\sigma)$, if its characteristic function is given by
\begin{equation}
\label{cf}
\Phi_{X}\left(t\right) = \left\{\begin{array}{ll}
\exp\left\{i\mu t-\sigma^{\alpha}\left|t\right|^{\alpha}\left[1-i\beta\delta_t\tan\frac{\pi\alpha}{2}\right]\right\} & \textrm{if $\alpha\neq 1$,}\\\\
\exp\left\{i\mu t-\sigma\left|t\right|\left[1+i\beta\frac{2}{\pi}\delta_t\ln\left|t\right|\right]\right\} & \textrm{if $\alpha=1$,}
\end{array}\right.
\end{equation}
\begin{equation}
\mbox{where}\quad \delta_t= \left\{\begin{array}{lll}
~~1 & \textrm{if $ t > 0$,}\\
~~0 & \textrm{if $ t =0$,}\\
-1 & \textrm{if $ t < 0$.}\nonumber
\end{array}\right.
\end{equation}
Using the inverse Fourier transform of the characteristic function $\Phi_X(t)$, we can give a integral representation of the probability density function as:
\begin{equation}
f\left(x;\alpha,\sigma,\beta,\mu\right)=\frac{1}{2\pi}\int^{+\infty}_{-\infty}\exp\left(-itx\right)\Phi_X\left(t\right)\ \ud t.
\end{equation}
Then the probability density function of a symmetric standard $\alpha$-stable random variable (i.e. $\sigma=1$, $\mu=0$ and $\beta=0$) can be expressed as:
\begin{equation}\label{intdens}
f\left(x;\alpha\right)=\frac{1}{\pi}\int^{+\infty}_{0}\exp\left(-\left|t\right|^{\alpha}\right)\cos\left(tx\right)\ \ud t.
\end{equation}
Formula (\ref{intdens}) does not have closed form expression, except in three cases (Levy, Cauchy and Gaussian distributions). However, it can be numerically integrated.\\

\noindent In the following of this paper, we will consider the standard symmetric $\alpha$-stable distribution that we denote S$\alpha$S (i.e. the case where $\beta=0$, $\mu=0$ and $\sigma=1$).
\subsection{Stable ARFISMA model}\label{section22}\noindent
Seasonal Fractional Autoregressive Integrated Moving Average time series with symmetric $\alpha$-stable (S$\alpha$S) innovations, denoted hereafter by ARFISMA-S$\alpha$S, were studied in Diongue {\it et al.} \cite{DDM}. These models exhibit long range dependence, seasonality and high variability, and are an infinite variance counterpart to the ARFISMA model introduced by Reisen {\it et al.} \cite{RRP1}. In this Section, we examine the definition and the basic characteristics of stable ARFISMA model, and we refer to Diongue {\it et al.} \cite{DDM} or Ndongo {\it et al.} \cite{NDDS}  for a detailed description.\\
 
\noindent Suppose $\left(Z_t\right)_{t\in\mathbb{Z}}$ is
a sequence of independently and identically distributed (i.i.d.) S$\alpha$S ($0<\alpha\leq 2$) random variables with mean zero  and scale parameter equals to 1. Let $B$ be the back shift operator and $s$ the seasonal period, then the polynomials of orders $p,\ q,\ P,\ Q$  are respectively defined by:
$$\phi_p(B)=1-\phi_1B-\phi_2B^2-\cdots-\phi_pB^p\qquad \theta_q(B)=1+\theta_1B+\theta_2B^2+\cdots+\theta_qB^q$$
$$\Phi_P(B^s)=1-\Phi_{s}B^{s}-\Phi_{2s}B^{2s}-\cdots-\Phi_{Ps}B^{Ps}\qquad \Theta_Q(B^s)=1+\Theta_sB^{s}+\Theta_{2s}B^{2s}+\cdots+\Theta_{Qs}B^{Qs}.$$
It is assumed that these polynomials have no common zeros and satisfy the conditions
$\Phi(z^s)\phi(z)\neq 0$ and $\Theta(z^s)\theta(z)\neq 0$ for $|z|=1$. Futhermore, in the above equations, $(\Phi_i)_{1\leq i\leq P}$, $(\phi_j)_{1\leq j\leq p}$, $(\Theta_k)_{1\leq k\leq Q}$ and $(\theta_l)_{1\leq l\leq q}$ are unknown parameters.\\

\noindent A zero-mean process $(X_t )_{t\in\mathbb{Z}}$ is said a seasonal fractionally integrated process with S$\alpha$S innovations, denoted here by ARFISMA(p, d, q)$\times$(P, D, Q)$_{s}$-S$\alpha$S, if the following equation is satisfied
\begin{equation}
\label{model}
\phi_p (B)\Phi_P (B^s)X_t =(1-B)^{-d}(1-B^{s})^{-D}\theta_q (B)\Theta_Q(B^s)Z_t,
\end{equation}
where the long-memory parameters $d$ and $D$ are fractional parameters. Notice that this model is a direct generalization of the ARFIMA-S$\alpha$S model of Kokoszka and Taqqu \cite{KT1995} and it contains several particular cases (e.g. Diongue {\it et al.} \cite{DDM} for more details). We assume that the following condition holds:
\begin{equation}\label{cond1}
\left|d+D\right|<1-\frac{1}{\alpha}\ \textrm{and}\ \left|D\right|<1-\frac{1}{\alpha},\quad\textrm{with}\  1<\alpha\leq 2.
\end{equation} 
\noindent
According to Giraitis and Leipus \cite{GL} or Reisen et al. \cite{RRP1}, one can easily show that
{\setlength\arraycolsep{2pt}
\begin{eqnarray}\label{seasonal}
(I-B)^{d}(I-B^s )^{D} &= &\displaystyle\prod_{j=0}^{[\frac{s}{2}]}\big[(1-e^{i\lambda_j }B)(1-e^{-i\lambda_j }B)\big]^{d_j}{}
\nonumber\\
&=&\displaystyle\prod_{j=0}^{[\frac{s}{2}]}(1-2cos\lambda_j B + B^2 )^{d_j},
\end{eqnarray}}
\noindent
with $d_0 = \frac{d+D}{2}$, $d_i = D$, for $i=1,\ldots, [\frac{s}{2}]-1$, $d_{[\frac{s}{2}]}=\frac{D}{2}$, and $\lambda_j =\frac{2\pi j}{s}$, for $j=0,\ldots, [\frac{s}{2}]$.\\
 By means of the expansion
$$\displaystyle\prod_{j=0}^{[\frac{s}{2}]}(1-2cos\lambda_j B + B^2 )^{d_j}=\sum_{j=0}^{+\infty}b_{j}(d,\ \nu)B^j,$$
where the coefficients $\left(b_j(d,\ \nu)\right)_{j\geq 0}$ are given by:
\begin{equation}\label{bj}
b_j(d,\ \nu) =\sum_{\substack{0\leq l_0,\cdots,l_{[\frac{s}{2}]}\leq j,\\ l_0+\cdots+l_{[\frac{s}{2}]}=j}}C_{l_0}\left(d_0,\nu_0\right)\cdots C_{l_{[\frac{s}{2}]}}\left(d_{[\frac{s}{2}]},\nu_{[\frac{s}{2}]}\right),
\end{equation}
where $d=(d_0,\ldots,d_{[\frac{s}{2}]})$, $\nu=(\nu_0,\ldots,\nu_{[\frac{s}{2}]})$ with $\nu_j=\cos(\lambda_j)$, for  $j=0,\ldots, [\frac{s}{2}]$.
\noindent The weights $\left(C_{l}\left(d_i,\nu_i\right)\right)_{l\in\mathbb{Z}}$ are the Gegenbauer polynomials and they can be computed using the following recursion formula:
\begin{displaymath}
\left\{ \begin{array}{l}
C_{0}\left(d_{i},\nu_{i}\right)= 1\\
C_{1}\left(d_{i},\nu_{i}\right)= 2d_{i}\nu_{i}\\
C_{j}\left(d_{i},\nu_{i}\right)=
2\nu_{i}\left(\frac{d_{i}-1}{j}+1\right)C_{j-1}\left(d_{i},\nu_{i}\right)-\left(2\frac{d_{i}-1}{j}+1\right)C_{j-2}\left(d_{i},\nu_i\right),\ \forall j>1.
\end{array} \right.
\end{displaymath}
Hence the process defined by (\ref{model}) can be rewritten as:
$$\Phi_P (B^s)\phi_p (B)X_t =\Theta_Q(B^s)\theta_q (B)\displaystyle\prod_{j=0}^{[\frac{s}{2}]}(1-2\cos(\lambda_j) B + B^2 )^{-d_j} Z_t.$$
Therefore, under conditions (\ref{cond1}) Diongue {\it et al.} \cite{DDM} show that the process 
$\left(X_t\right)_{t\in\mathbb{Z}}$ is stationary and invertible, and the $AR(\infty)$ and $MA(\infty)$ representations are respectively given by:
\begin{equation}\label{ARrepres}
Z_t=\frac{\Phi_P\left(B^s\right)\phi_p\left(B\right)}{\Theta_Q\left(B^ s\right)\theta_q\left(B\right)}\displaystyle\prod_{j=0}^{[\frac{s}{2}]}(1-2\nu_j B + B^2 )^{d_j} X_t = \sum^{\infty}_{j=0}\tilde{c}_jX_{t-j}
\end{equation}
and
\begin{equation}\label{MArepres}
X_t=\frac{\Theta_Q\left(B^ s\right)\theta_q\left(B\right)}{\Phi_P\left(B^s\right)\phi_p\left(B\right)}\displaystyle\prod_{j=0}^{[\frac{s}{2}]}(1-2\nu_j B + B^2 )^{-d_j} Z_t = \sum^{\infty}_{j=0}c_jZ_{t-j}.
\end{equation}
The coefficients $\left( \tilde{c}_j\right)_{j\geq 0}$ and $\left( c_j\right)_{j\geq 0}$ are defined respectively by:
\begin{equation}
\label{coefCjtilde}
\Phi_{P}(z^s)\phi_{p}(z)\sum_{j=0}^{+\infty}\pi_j(d,\ \nu)z^j=\Theta_{Q}(z^s)\theta_{q}(z)\sum_{j=0}^{+\infty}\tilde{c}_{j}z^{j},\qquad \textrm{for}\ |z|<1
\end{equation}
and
\begin{equation}
\label{coef}
\Theta_{Q}(z^s)\theta_{q}(z)\sum_{j=0}^{+\infty}b_{j}(d,\ \nu)z^j=\Phi_{P}(z^s)\phi_{p}(z)\sum_{j=0}^{+\infty}c_{j}z^{j},\qquad \textrm{for}\ |z|<1,
\end{equation}
where the weights $\left( \pi_j(d,\ \nu)\right)_{j\geq 0}$ are such that $\pi_j(d,\ \nu)= b_j(-d,\ \nu)$, with the coefficients $\left(b_j(-d,\ \nu)\right)_{j\geq 0}$ given in equation (\ref{bj}). In the particular case where $P=Q=0$ \footnote{Expressions of these coefficients for others values of $P$ and $Q$ can be found in Ndongo \cite{Ndongo2011}}, It is easy to verify that the coefficients $\left( \tilde{c}_j\right)_{j\geq 0}$ and $\left( c_j\right)_{j\geq 0}$ can be computed using the following recursion formula: 
\begin{equation}\label{Cjtilde}\tilde{c}_0=1\quad \textrm{and}\quad
\tilde{c}_j=\pi_j\left(d,\nu\right)-\sum^{\min\left(j,p\right)}_{i=1}\phi_i\pi_{j-i}\left(d,\nu\right)+\sum^{\min\left(j,q\right)}_{i=1}\theta_i\tilde{c}_{j-i},\qquad \forall j\geq 1
\end{equation}
and
\begin{equation}\label{cj} c_0=1\quad \textrm{and}\quad
c_j=b_j\left(d,\nu\right)+\sum^{\min\left(j,p\right)}_{i=1}\phi_{i} c_{j-i}-\sum^{\min\left(j,q\right)}_{i=1}\theta_i b_{j-i}\left(d,\nu\right),\qquad \forall j>1.
\end{equation}
The power transfer function of $\left(X_t\right)_{t\in\mathbb{Z}}$ is given by:
\begin{equation}
 \label{ftr}
  h_X\left(\lambda\right)=\left|\frac{\theta_q\left(e^{-i\lambda}\right)\Theta_Q\left(e^{-i\lambda s}\right)}{\phi_p\left(e^{-i\lambda}\right)\Phi_P\left(e^{-i\lambda s}\right)}\right|^2\left|2\sin\left(\frac{\lambda}{2}\right)\right|^{-2d}\left|2\sin\left(\frac{\lambda s}{2}\right)\right|^{-2D},\  -\pi\leq\lambda\leq\pi.
 \end{equation}
\section{ECF method and asymptotic properties}\label{Sec.3}
\subsection{ECF method}\noindent
Because the Empirical Characteristic Function (ECF) is the Fourier transform of the empirical distribution function, it retains all the information in the sample
but can overcome difficulties arising from the likelihood. The basic idea for the ECF method is to minimize some distance measure between the empirical characteristic function and the Characteristic Function (CF). It should be noted that, there are various ECF methods and the approach in the dependent case is not exactly the same as in the i.i.d case, because the dependence must be taken into account. In this Section, we summarize the approach in the dependent case and refer the reader to Knight and Yu \cite{KY} or Yu \cite{yu}, for more details.\\

\noindent
Given a stationary symmetric $\alpha$-stable ARFISMA process $\left(X_t\right)_{t\in\mathbb{Z}}$ defined by equation (\ref{model}). We denote by
$\psi=(\alpha,\ d,\ D,\ \phi_{1},\ldots,\phi_{p},\ \theta_{1},\ldots,\theta_{q},\ \Phi_{1},\ldots,\Phi_{P},\ \Theta_{1},\ldots,\Theta_{Q})$, its parameters. The preceding discussion in Section \ref{section22} motivates the choice of our parameter space $\Psi$ given by
$$\Psi=\{\psi\in\mathbb{R}^{p+q+P+Q+3} : \phi_{p}\neq 0,\ \theta_{q}\neq 0,\ \Phi_{P}\neq 0,\ \Theta_{Q}\neq 0, \theta_q\left(z\right)\ ,\ \phi_p\left(z\right),\ \Phi_p\left(z^s\right)\ \textrm{and}\ \Theta_Q\left(z^s\right),$$
 $$ \textrm{have no common zeros for}\ \left|z\right|\leq 1,\ \left|d+D\right|<1-\frac{1}{\alpha}\ \textrm{and}\ \left|D\right|<1-\frac{1}{\alpha},\ 1<\alpha\leq 2 \}.$$
 We assume that $\psi_0$ is the true value of $\psi$ and that $\psi_0$ is in the interior of the compact set $\Psi\subseteq \mathbb{R}^{p+q+P+Q+3}$.
We wish to estimate $\psi$ from a finite realization $\left(X_1,\ldots,X_T\right)$ by the ECF method. In the dependent case, the procedures involve moving blocks of data. Denote the moving blocks for $\left(X_1,\ldots,X_T\right)$ as $Y_{j}=\left(X_{j},\ldots,X_{j+m}\right)^{'},\ j=1,\ldots,T-m$. Thus each block has $m+1$ observations.
\begin{enumerate}
\item[$\bullet$] The CF of each block is defined by:
 $$c\left(r,\psi\right)=\mathbb{E}\left(\exp\left(ir'Y_j\right)\right),\ \textrm{where}\ r=(r_1,\ldots,r_{m+1})'\ \  \textrm{is the transform variables}.$$
\item[$\bullet$] The joint ECF is given by:
 $$c_{n}\left(r\right)=\frac{1}{n}\sum_{j=1}^{n}\exp\left(ir'Y_j\right),\ \textrm{where}\ n=T-m.$$
\item[$\bullet$] To estimate the parameter $\psi$ via the ECF method, one can minimize
 \begin{equation}
 \label{eqI1}
 I_{n}(\psi)=\int\ldots\int\left|c_{n}(r)-c(r;\psi)\right|^{2}g(r)\ \ud r,
 \end{equation}
 where $g(r)$ being a continuous weighting function. Or equivalently, one can minimize
 \begin{equation}\label{const}
 I_{n}(\psi)=\int\ldots\int\left|c_{n}(r)-c(r;\psi)\right|^{2}\ \ud G(r),
 \end{equation}
 or solve the following estimation equation
 \begin{equation}
 \label{wf}
 \int\ldots\int\left(c_{n}(r)-c(r;\psi)\right)W\left(r,\psi\right)\ \ud r=0,
 \end{equation}
 where $G(r)$ and $W(r,\psi)$ are weighting functions.
 \end{enumerate}
To use the ECF method, we have to ensure, at first, that the joint CF of the stable ARFISMA model has a close form. On the other hand, we need to determine the value of the size of moving blocks $m$ and the weighting function to be used. Thus, we show in theorem below, that the joint CF has close form expression.
 \begin{theorem}\label{Th3.1}\rm
 Let $\left(X_t\right)_{t\in\mathbb{Z}}$ be an ARFISMA-S$\alpha$S process defined by equation (\ref{model}). The joint CF of  $X_{t-m},X_{t-m+1},\ldots,X_{t}$ is given by:
$$c\left(r_{1},r_{2},\ldots,r_{m+1};\psi\right)=\exp\left\{-\sum_{j=0}^{\infty}\left|\sum_{l=0}^{m}r_{l+1}c_{j+l}\right|^{\alpha}-\sum_{l=2}^{m+1}\left|\sum_{h=0}^{m+1-l}r_{h+l}c_{h}\right|^{\alpha}\right\},$$
where the coefficients $\left(c_j\right)_{j\in\mathbb{Z}}$ are given by equation (\ref{coef}).
 \end{theorem}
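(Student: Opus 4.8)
The plan is to reduce everything to the $MA(\infty)$ representation (\ref{MArepres}) together with the defining property of the stable law, and then do the bookkeeping that produces the two sums. Pair the transform variable $r_{l+1}$ with $X_{t-m+l}$, $l=0,\dots,m$, so that
$$c(r_1,\dots,r_{m+1};\psi)=\mathbb{E}\Big[\exp\Big(i\sum_{l=0}^{m}r_{l+1}X_{t-m+l}\Big)\Big].$$
Substituting $X_{t-m+l}=\sum_{k\ge 0}c_kZ_{t-m+l-k}$ from (\ref{MArepres}) and reindexing via $n=k+m-l$, the linear form $\sum_{l=0}^{m}r_{l+1}X_{t-m+l}$ becomes $\sum_{n\ge 0}a_n(r)Z_{t-n}$ with
$$a_n(r)=\sum_{\substack{0\le l\le m\\ l\ge m-n}}r_{l+1}\,c_{n-m+l}.$$
Since the $Z_{t-n}$ are i.i.d. standard S$\alpha$S one has $\mathbb{E}[\exp(i\xi Z_{t-n})]=\exp(-|\xi|^{\alpha})$, so by independence the joint CF equals $\prod_{n\ge 0}\exp(-|a_n(r)|^{\alpha})=\exp\big(-\sum_{n\ge 0}|a_n(r)|^{\alpha}\big)$.

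It then remains to split $\sum_{n\ge 0}|a_n(r)|^{\alpha}$ according to whether $n\ge m$ or $0\le n\le m-1$. For $n\ge m$ the constraint $l\ge m-n$ is vacuous, so with $j=n-m\ge 0$ one gets $a_n(r)=\sum_{l=0}^{m}r_{l+1}c_{j+l}$, which produces the first sum $\sum_{j\ge 0}\big|\sum_{l=0}^{m}r_{l+1}c_{j+l}\big|^{\alpha}$. For $0\le n\le m-1$, substitute $h=n-m+l$ (so $h$ runs from $0$ to $n$) and then $l'=m+1-n\in\{2,\dots,m+1\}$; this rewrites $a_n(r)=\sum_{h=0}^{m+1-l'}r_{h+l'}c_h$ and yields the second sum $\sum_{l'=2}^{m+1}\big|\sum_{h=0}^{m+1-l'}r_{h+l'}c_h\big|^{\alpha}$. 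Adding the two pieces gives the announced formula.

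The delicate point is not the algebra but justifying the passage of the infinite innovation sum through the expectation. I would make this rigorous by truncation: replacing $X_{t-m+l}$ by $X^{(N)}_{t-m+l}=\sum_{k=0}^{N}c_kZ_{t-m+l-k}$, the computation above is elementary since one is dealing with a finite linear combination of independent stable variables; one then lets $N\to\infty$, using that the $MA(\infty)$ series in (\ref{MArepres}) converges in $\mathcal{L}^{\gamma}$ for $0<\gamma<\alpha$ (hence a.s.) by Diongue {\it et al.} \cite{DDM} under condition (\ref{cond1}), together with continuity of characteristic functions, and using $\sum_j|c_j|^{\alpha}<\infty$ — which holds because $|c_j|$ decays like $j^{(d+D)/2-1}$, and like $j^{D-1}$ at the remaining seasonal frequencies, while $|d+D|<1-\frac1\alpha$ and $|D|<1-\frac1\alpha$ — to guarantee that the limiting series $\sum_{n\ge 0}|a_n(r)|^{\alpha}$ is finite for every $r$. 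As a by-product this shows $c(\cdot;\psi)$ is continuous, which is exactly what is needed for the ECF integrals (\ref{eqI1})--(\ref{wf}) to be well defined.
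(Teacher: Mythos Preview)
Your proof is correct and follows essentially the same route as the paper's: substitute the $MA(\infty)$ representation, collect the coefficient of each innovation $Z_{t-n}$, use independence and the S$\alpha$S characteristic function, and then split the resulting sum into the ranges $n\ge m$ and $0\le n\le m-1$. The one noteworthy difference is that where you justify the infinite product by a truncation/$\mathcal{L}^\gamma$-convergence argument, the paper instead appeals to a result of Cline \cite{Cli} and Cline--Brockwell \cite{CliBroc} asserting that $\sum_j b_j Z_j$ with i.i.d.\ S$\alpha$S innovations and $\sum_j|b_j|^\alpha<\infty$ is itself S$\alpha$S with scale $(\sum_j|b_j|^\alpha)^{1/\alpha}$; either justification is sound.
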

 \begin{proof} The proof is postponed to Appendix A. \end{proof}\\
 
 \noindent
 Concerning the choice of the weighting function, according to Knight and Yu \cite{KY} or Yu \cite{yu} one can obtain a optimal weighting function $W^{*}(r,\psi)$ given by:
\begin{equation}
W^{*}\left(r,\psi\right)=\int\ldots\int\exp\left(ir'Y_j\right)\times\frac{\partial\log f\left(X_{j+m}|X_{j},\ldots,X_{j+m-1}\right)}{\partial\psi}\ \ud X_{j}\ldots\ud X_{j+m},
\end{equation}
where $f\left(X_{j+m}|X_{j},\ldots,X_{j+m-1}\right)$ is the conditional score function. This weight is optimal in the sense that the asymptotic variance of the estimator based on equation (\ref{wf}) can be made arbitrarily close to the Cram\'er-Rao lower bound when $m$ is large enough. Because the conditional score of the stable ARFISMA model is unknown, these procedure is not feasible. Instead, in this paper, we use the sub-optimal ECF method with an exponential weight $g(r)=\exp(-r'r)$. The advantages of using an exponential weight are twofold. First, it puts more weight on the interval around the origin, consistent with the recognition that the CF contains the most information around the origin and the second reason is for computational convenience.\\

\noindent
It is very important to recognize that when using the joint ECF, an additional choice needs to be made, which is that of the overlapping size of the moving blocks, $m$. The efficiency of the ECF estimator is very sensitive of the choice of $m$, as the moving blocks with a different $m$ may contain different amounts of information in the sample. In general, there is a trade-off between large $m$ and small $m$. Since the process in (\ref{model}) does not have a Markov property, a large value of $m$ works better than the smaller $m$ in term of the asymptotic efficiency, because it retains all the information in the moving blocks of the original sequence. Unfortunately, it is not very clear how $m$ affects efficiency, and we do not have an obvious guide to the choice of $m$. Ideally, $m$ should be selected so that it minimizes the mean-square error (MSE) of the ECF estimator. Precisely, let $\hat{\psi}^{(m)}$ be the ECF estimator for a given $m$. Then we look for $m_{opt}$ such that
\begin{equation}
\label{crit}
m_{opt}=\arg\min_{m}\frac{1}{R}\sum_{j=1}^{R}\left(\hat{\psi_j}^{(m)}-\psi_0\right)^2,
\end{equation}
where $R$ is the number of replication, $\psi_0$ is the nominal value of $\psi$ and $\hat{\psi_j}^{(m)}$ is the estimate for sample $j$.
\subsection{Asymptotic properties}\noindent
The asymptotic properties of the ECF estimator in the i.i.d case have been obtained by Heathcote \cite{Heathcote1977}. In the dependent case, Knight and Yu \cite{KY} establish, under some regularity conditions, that the ECF estimator is consistent and asymptotically normally distributed. They give sufficient conditions to check the conditions listed by Newey and McFadden \cite{NMc1994}, namely compactness, continuity, uniform convergence, and identifiability, under which an extremum estimator is consistent and asymptotically normally distributed. In this Section, our work consist to check a set of sufficient assumptions to verify regularity conditions listed in Knight and Yu \cite{KY}. So for any fixed $m$, the following assumptions are necessary:
\begin{assump}\label{Assump3.2}\rm
The parameter space $\Psi\subset\mathbb{R}^{p+q+P+Q+3}$ is a compact set with $\psi_0$ is in the interior of $\Psi$.
\end{assump}

\begin{assump}\label{Assump3.3}\rm
The coefficients $\left(c_j\right)_{j\geq 0}$ in the moving average representation (\ref{MArepres}) satisfy the following properties:
\begin{enumerate}
\item[(a)] $\quad\displaystyle\sum_{j=0}^{\infty}\left|c_j\right|^{2(\alpha-1)}<\infty,\qquad\forall\  1<\alpha\leq 2$,

\item[(b)] $\quad\displaystyle\sum_{j=0}^{\infty}\left|\frac{\partial~c_{j}}{\partial\delta}\right|^{2}<\infty\quad\textrm{and}\quad \displaystyle\sum_{j=0}^{\infty}\left|\frac{\partial^2~c_{j}}{\partial\delta\partial\delta'}\right|^{2}<\infty,$

\item[(c)] $\quad\displaystyle\sum_{j=0}^{\infty}\left(\frac{\partial}{\partial\delta}\log\left|\sum_{l=0}^{m}r_{l+1}c_{j+l}\right|\right)^4<\infty$,
\end{enumerate}
where $\delta=(d,\ D,\ \phi_{1},\ldots,\phi_{p},\ \theta_{1},\ldots,\theta_{q},\ \Phi_{1},\ldots,\Phi_{P},\ \Theta_{1},\ldots,\Theta_{Q}).$
\end{assump}

\begin{assump}\label{Assump3.4}\rm
$\quad\displaystyle\int\ldots\int \left|r_j\right|^6g(r)\ dr<\infty,\ \forall\ j=1,\ldots,m+1.$
\end{assump}

\begin{assump}\label{Assump3.5}\rm Let
$I_{0}(\psi)=\displaystyle\int\ldots\int\left|c(r;\psi_0)-c(r;\psi)\right|^{2}g(r)\ dr$ and $I_{0}(\psi)=0$ only if $\psi=\psi_0$.
\end{assump}

\begin{assump}\label{Assump3.6}\rm
 Let $\Im_j$ be a $\sigma$-algebra such that $\left\{K_j,\Im_j\right\}$ is an adapted stochastic sequence, where $K_j=K(Y_j;\ \psi)$ and $K(x;\ \psi)$ is a function defined by:
$$K(x;\ \psi)=\int\ldots\int\left(\cos(r'x)-c(r;\psi)\right)\frac{\partial}{\partial\psi}~c(r;\psi) g(r)\ dr.$$
We can think of $\Im_j$ as being the $\sigma$-algebra generated by the entire current and past history of $K_j$. Let $\nu_j=\mathbb{E}\left[K_0|K_{-j},K_{-j-1},\ldots\right]-\mathbb{E}\left[K_0|K_{-j-1},K_{-j-2},\ldots\right]$, for $j\geq 0$. Assume that $\mathbb{E}\left[K_0|\Im_{-l}\right]$ converges in mean square to $0$ as $l\rightarrow \infty$ and  $\sum_{j=0}^{\infty}\mathbb{E}\left[\nu_{j}^{'}\nu_j\right]<0$.
\end{assump}

\begin{assump}\label{Assump3.7}\rm $g\left(r\right)$ is a probability density function in $\mathbb{R}^{m+1}$.

\end{assump}

\begin{rqm}~\\\rm\vspace{-0.5cm}
\begin{enumerate}
\item[$\bullet$] Assumptions \ref{Assump3.3} and \ref{Assump3.4} are technical assumptions and allow to verify the hypothesis ($A_2$), ($A_3$), ($A_5$) and ($A_6$) of Knight and Yu \cite{KY}.
\item[$\bullet$] Assumptions \ref{Assump3.2}, \ref{Assump3.5}, \ref{Assump3.6} and \ref{Assump3.7} are respectively similar to assumptions ($A_1$), ($A_4$), ($A_7$) and ($A_8$) of Knight and Yu \cite{KY}. Assumption \ref{Assump3.2} ensures the compactness of the parameter set. Assumption \ref{Assump3.5} is the identification condition whereas Assumption \ref{Assump3.6} provides sufficient conditions for a strong law of large numbers and a central limit theorem for a strictly stationary and ergodic sequence.
\end{enumerate}
\end{rqm}
The asymptotic properties of the ECF estimator based on the joint characteristic function are summarize in the following theorem:

\begin{theorem}\label{Th3.9}
\label{Thm_ConsNormECF}\rm Let $\widehat{\psi}_n=\displaystyle\arg\min_{\psi\in\Psi}I_{n}(\psi)$. Suppose that Assumptions \ref{Assump3.2}-\ref{Assump3.5} and Assumption \ref{Assump3.7} hold, then $\quad\widehat{\psi}_n\stackrel{p.s}{\longrightarrow}\psi_0$. If, in addition Assumption \ref{Assump3.6} holds, then:
$$\sqrt{n}\left(\widehat{\psi}_n-\psi_0\right)\stackrel{\mathcal{D}}{\longrightarrow}\mathcal{N}\left(0,\ B^{-1}(\psi_0)A(\psi_0)B^{-1}(\psi_0)\right),$$
$$\mbox{where}\quad A(\psi_0)=Var\left(K(Y_1;\ \psi_0)\right)+ 2\sum_{j=2}^{\infty} cov\left(K(Y_1;\ \psi_0),\ K(Y_j;\ \psi_0)\right)$$
and $B^{-1}(\psi_0)$ is the inverse of the matrix $B(\psi_0)$ defined by:
$$ B(\psi_0)=\int\ldots\int\frac{\partial}{\partial\psi}~c(r;\psi_0)\frac{\partial}{\partial\psi'}~c(r;\psi_0)g(r)\ dr.$$
\end{theorem}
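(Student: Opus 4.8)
The plan is to treat $\widehat\psi_n$ as a minimum-distance (extremum) estimator and to verify the regularity conditions $(A_1)$--$(A_8)$ of Knight and Yu \cite{KY} --- equivalently the compactness, continuity, uniform-convergence and identification conditions of Newey and McFadden \cite{NMc1994} --- Assumptions \ref{Assump3.2}--\ref{Assump3.7} having been stated precisely so that this check succeeds. Two preliminary observations organise the argument. First, by (\ref{MArepres}) the process $(X_t)_{t\in\mathbb Z}$ is a measurable functional of the i.i.d.\ sequence $(Z_t)$, hence strictly stationary and ergodic; therefore the blocks $Y_j$ and the variables $K(Y_j;\psi)$ are stationary and ergodic, which makes Birkhoff's ergodic theorem available wherever a law of large numbers is needed. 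Second, by Theorem \ref{Th3.1} the joint CF $c(r;\psi)$ is real and lies in $(0,1]$, so that $|c_n(r)-c(r;\psi)|^2=(\mathrm{Re}\,c_n(r)-c(r;\psi))^2+(\mathrm{Im}\,c_n(r))^2$ with the last summand free of $\psi$; this is why the estimating function $K$ of Assumption \ref{Assump3.6} is built from $\cos(r'x)$, and it immediately produces the first-order condition $\tfrac1n\sum_{j=1}^n K(Y_j;\widehat\psi_n)=0$ once $\widehat\psi_n$ is known to be interior.

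For consistency I would expand $c_n-c(\psi)=(c_n-c(\psi_0))+(c(\psi_0)-c(\psi))$ to get $I_n(\psi)=I_0(\psi)+R_n+2\,\mathrm{Re}\!\int(c_n-c(\psi_0))\overline{(c(\psi_0)-c(\psi))}\,g$, where $R_n=\int|c_n-c(\psi_0)|^2g$ does not depend on $\psi$. Pointwise in $r$ one has $c_n(r)\to c(r;\psi_0)$ almost surely by the ergodic theorem, and since $|c_n|,|c|\le 1$ while $g$ is a probability density (Assumption \ref{Assump3.7}), dominated convergence gives $R_n\to 0$ a.s.; the cross term is, uniformly in $\psi$, bounded by $2R_n^{1/2}\sup_{\psi\in\Psi}I_0(\psi)^{1/2}\le 4R_n^{1/2}$, so $\sup_{\psi\in\Psi}|I_n(\psi)-I_0(\psi)|\to 0$ a.s. The map $\psi\mapsto c(r;\psi)$ is continuous, because the $c_j$ depend continuously on $\psi$ through (\ref{coef})--(\ref{cj}) and the exponent series of Theorem \ref{Th3.1} converges by Assumption \ref{Assump3.3}(a) (using $2(\alpha-1)\le\alpha$ for $1<\alpha\le 2$); combined with dominated convergence this makes $I_0$ continuous on the compact set $\Psi$ (Assumption \ref{Assump3.2}), while Assumption \ref{Assump3.5} forces $\psi_0$ to be its unique minimiser, with value $0$. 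The standard argmin argument for extremum estimators then yields $\widehat\psi_n\longrightarrow\psi_0$ almost surely.

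For asymptotic normality I would use that, $\psi_0$ being interior and $\widehat\psi_n$ consistent, eventually $\tfrac1n\sum_{j=1}^n K(Y_j;\widehat\psi_n)=0$, and a componentwise mean-value expansion about $\psi_0$ gives $\sqrt{n}\,(\widehat\psi_n-\psi_0)=-\big[\tfrac1n\sum_{j=1}^n\partial_{\psi'}K(Y_j;\bar\psi_n)\big]^{-1}\tfrac1{\sqrt{n}}\sum_{j=1}^n K(Y_j;\psi_0)$ for some $\bar\psi_n$ between $\widehat\psi_n$ and $\psi_0$. Differentiating under the integral sign --- legitimate because Assumptions \ref{Assump3.3}(b)--(c) together with the moment bound \ref{Assump3.4} on $g$ (recall $g(r)=\exp(-r'r)$) dominate $\partial_\psi c$ and $\partial^2_{\psi\psi'}c$ by $g$-integrable functions --- and using $\mathbb E[\cos(r'Y_1)]=c(r;\psi_0)$, one gets $\mathbb E[\partial_{\psi'}K(Y_1;\psi_0)]=-\int\partial_\psi c(r;\psi_0)\,\partial_{\psi'}c(r;\psi_0)\,g(r)\,dr=-B(\psi_0)$, which is nonsingular; a uniform law of large numbers for the stationary ergodic family $\{\partial_{\psi'}K(Y_j;\cdot)\}$ plus $\bar\psi_n\to\psi_0$ gives $\tfrac1n\sum_{j=1}^n\partial_{\psi'}K(Y_j;\bar\psi_n)\to-B(\psi_0)$ a.s. For the score, $\mathbb E[K(Y_1;\psi_0)]=0$ and, since $\cos(r'x)-c(r;\psi_0)$ is bounded and $\int|\partial_\psi c(r;\psi_0)|g(r)\,dr<\infty$, $\{K(Y_j;\psi_0)\}$ is a bounded stationary ergodic sequence; Assumption \ref{Assump3.6} --- the Gordin-type conditions that $\mathbb E[K_0\mid\Im_{-l}]\to 0$ in mean square and $\sum_{j\ge0}\mathbb E[\nu_j'\nu_j]<\infty$ --- supplies the martingale-difference approximation $K_0=\sum_{j\ge0}\nu_j$, under which the central limit theorem for stationary ergodic sequences gives $\tfrac1{\sqrt{n}}\sum_{j=1}^n K(Y_j;\psi_0)\stackrel{\mathcal D}{\longrightarrow}\mathcal N(0,A(\psi_0))$ with $A(\psi_0)=Var(K(Y_1;\psi_0))+2\sum_{j\ge2}cov(K(Y_1;\psi_0),K(Y_j;\psi_0))$. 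Slutsky's theorem then yields $\sqrt{n}\,(\widehat\psi_n-\psi_0)\stackrel{\mathcal D}{\longrightarrow}\mathcal N(0,B^{-1}(\psi_0)A(\psi_0)B^{-1}(\psi_0))$.

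The hard part will be the dependent central limit theorem for $\{K(Y_j;\psi_0)\}$, and this is where the infinite variance of the innovations genuinely bites: one cannot invoke a central limit theorem for linear processes in the $X_t$ directly, since they have infinite variance when $\alpha<2$, so the argument must pass through the \emph{bounded} functionals $K(Y_j;\psi_0)$ and the martingale approximation of Assumption \ref{Assump3.6}, and one must verify that the long-run variance $A(\psi_0)$ is finite --- which is exactly what the summability of $(c_j)$ in Assumption \ref{Assump3.3} and the weight moments in Assumption \ref{Assump3.4} secure, by producing a $g$-integrable dominating function for $\partial_\psi c(r;\psi)$. The remaining steps --- the dominated-convergence manipulations for $I_0$ and $R_n$, the differentiation under the integral, and the uniform laws of large numbers --- are then routine.
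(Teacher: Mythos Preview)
Your strategy is correct and is the paper's: both reduce to Theorem 2.1 of Knight and Yu \cite{KY} by verifying their conditions $(A_1)$--$(A_8)$. The difference is one of emphasis. You spell out the extremum-estimator machinery --- uniform convergence of $I_n$ to $I_0$ via the decomposition $I_n=I_0+R_n+\text{cross}$ with a Cauchy--Schwarz bound on the cross term, mean-value expansion of the score, Gordin-type CLT under Assumption \ref{Assump3.6} --- and relegate the domination of $\partial_\psi c$ and $\partial^2_{\psi\psi'}c$ to a closing remark. The paper does the opposite: it simply cites Knight--Yu for the extremum argument and devotes essentially all of Appendix B to those derivative bounds, i.e.\ to checking $(A_2)$, $(A_5)$, $(A_6)$. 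That work --- writing $c(r;\psi)=\exp(-U(r;\psi))$ with $U$ the infinite sum of Theorem \ref{Th3.1}, differentiating term by term, and using H\"older's inequality together with Assumption \ref{Assump3.3} to produce dominating functions of the form $\Gamma(r)=C\big(\sum_l|r_{l+1}|\big)^k$ whose $g$-integrability then follows from Assumption \ref{Assump3.4} --- is not deep but is not quite ``routine'' either: separate treatments are needed for the $\alpha$-derivatives (where a logarithmic factor appears) and for the $\delta$-derivatives (where Assumption \ref{Assump3.3}(a)--(b) enter after a H\"older step), and one must check that the bounds hold uniformly over the compact $\Psi$. Your consistency argument is in fact tidier than anything explicit in the paper, and your observation that $c(r;\psi)$ is real --- hence only $\cos(r'Y_j)$ enters the first-order condition --- is a useful point the paper leaves implicit; but to make the proof complete you would need to supply the derivative-domination estimates rather than defer them to the last sentence.
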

\begin{proof} The proof of this theorem is given in Appendix B. 
 \end{proof}
\section{The Two-Step Method}\label{secMCMCW}\noindent
In this section, the Two-Step Method (TSM) is used to estimate the parameters of ARFISMA-S$\alpha$S model. Suppose that $X_1,\ldots,X_T$ are generated by the ARFISMA-S$\alpha$S process defined by equation (\ref{model}). Denote by $\zeta=(d,\ D,\ \phi_{1},\ldots,\phi_{p},\ \theta_{1},\ldots,\theta_{q},\ \Phi_{1},\ldots,\Phi_{P},\ \Theta_{1},\ldots,\Theta_{Q})$, and $\psi=(\alpha,\ \zeta)$ the parameter vector to be estimated.    In the first step, we use the MCMC Whittle method developed in Ndongo {\it et al.}\cite{NDDS} to estimate the parameter vector $\zeta$. In the second phase, the MLE method given in Alvarez and Olivares \cite{AO2005} can be employed in the filtered series to estimate the innovation parameter $\alpha$. Therefore, the Two-Step Method can be described as follows. 
\subsection{The MCMC Whittle method}\noindent
This method is based on approximation of the Whittle likelihood function, using Markov Chains Monte Carlo (MCMC) method. Thus, the MCMC-Whittle's estimator $\hat{\zeta}_W$ of $\zeta$ is obtained by minimizing the following likelihood function:
\begin{equation}\label{eqLW}
L_{W}\left(X,\ \zeta\right)= \frac{1}{N}\sum_{j=1}^{N}\frac{1}{h_X(\lambda_j,\zeta)},
\end{equation}
where $N$ is taken large enough from the strong law of large number and $h_X(\lambda,\zeta)$ is the power transfer function of the process $\left(X_t\right)_{t\in\mathbb{Z}}$ generating the data, and is defined by equation (\ref{ftr}). In equation (\ref{eqLW}), the sequence $\lambda_{1},\ldots,\lambda_{N}$ is generated using a Metropolis-Hastings  algorithm (see Ndongo {\it et al.}\cite{NDDS} for more details).
\subsection{The EML method}\noindent
In this second step, we firstly use the estimation obtained by MCMC-Whittle method to calculate the filtered series $(Z_t)_{t\in\mathbb{Z}}$, thanks to the autoregressive representation (\ref{ARrepres}). However, given the observation $X_1,\ldots,X_T$ the innovation $\left(Z_t\right)_{t=1,\ldots,T}$ cannot be directly computed, since an infinite sample would be needed. Nevertheless, they may be estimated by:
$$Z_t\left(\hat{\zeta}_W\right)=\sum_{j=0}^{T-1} \tilde{c}_j(\hat{\zeta}_W)X_{t-j},\qquad t=1,\ldots,T,$$
where the coefficients $\left( \tilde{c}_j(\hat{\zeta}_W)\right) _{j\geq 0}$ are defined by equation (\ref{coefCjtilde}). Secondly, we estimate the innovation parameter $\alpha$ given the observations $Z_1,\ldots,Z_T$ by using a MLE procedure developed in Alvarez and Olivares \cite{AO2005}. Therefore, $\hat{\alpha}$ is obtained by maximizing the following log-likelihood function:
$$l(\alpha) = \sum_{t=1}^{T}\log\left( f(Z_t,\alpha)\right),$$
where $f(Z_t,\alpha)$ is the density probability function for the random variable $(Z_t)_{t\in\mathbb{Z}}$.    
\section{Monte Carlo experiments}\label{Sec.5}\noindent
The Monte Carlo study is designed to check variability of the ECF procedure in comparison with the Two-Step Method. The simulation results give the average values, the root mean square error (RMSE) and the mean absolute error (MAE) of the estimation procedures based on $1500$ replications. The number of observations is set at $T=1500$ and the seasonal period is fixed at $s=4$. All calculations were carried out using an R programming environment (see \cite{Rprog}) on a Intel(R) Pentium Dual-Core Processor T2050, 1.60 GHz (2 CPUs) computer.\\

\noindent
  We generate different ARFISMA sequences with S$\alpha$S innovations. We adopt the method developed by Stoev and Taqqu \cite{stotaq} for FARIMA time series with S$\alpha$S innovations. We approximate the path $X_t,\ t=1,\ldots,T\ $ by the truncation moving average
  \begin{equation}
  \label{inftrun}
X_t = \sum_{j=0}^{M}c_jZ_{t-j},
\end{equation}
where the non random constants $(c_j)_{j\geq 0}$ are defined by equation (\ref{coef}), and $M$ is the truncation parameter and fixed in this study to $5000$. Indeed, in order to represent well the long-term dependence behaviour, Stoev and Taqqu \cite{stotaq} suggest to use large values for the truncation parameter. They show also that the choice of $M$ greater or comparable to the sample $T$ work well in most case. The S$\alpha$S innovations are obtained through a version of the Chambers {\it et al.} \cite{chaMaSt} algorithm described in Section 1.7 of Samorodnitsky and Taqqu \cite{SamTaqqu}. All of the simulations involve one of  the following models summarized in Table \ref{Tabmodel}.
\begin{table}[H]
\begin{tabular}{p{0.5cm} p{2.5cm}*{5}{p{1.8cm}}}
\hline
&\textrm{Parameters}&$\alpha$&$d$&$D$&$\phi$&$\theta$\\
\hline
 &\textrm{Model 1}&$1.6$&$0.15$&$0.20$&$0$&$0$\\
&\textrm{Model 2}&$1.6$&$0.15$&$0.20$&$0.60$&$0$\\
&\textrm{Model 3}&$1.6$&$0.15$&$0.20$&$0$&$0.40$\\
&\textrm{Model 4}&$1.6$&$0.15$&$0.20$&$0.60$&$0.40$\\
\hline
\end{tabular}
\caption{Data generating processes.}
\label{Tabmodel}
\end{table}\noindent
For each model, we choose, at first, several values for the size of the moving blocks $m$ ($m=1$ to $m=8$) to examine the effect of $m$ on the estimates. Then, we select the optimal $m$ for each model using the criterion described by equation (\ref{crit}), and we compare the results with the Two-Step Method. On the other hand, we vary the values of short memory parameters, to study their effects on the estimation of long memory parameters. We show here how to compute the ECF estimator in practice for $m=1$, and the procedure is the same for other values of $m$. Note that with $m=1$, the characteristic function and the empirical characteristic function are respectively given by:
\begin{equation}\label{crpsim1}
 c(r_1,r_2;\psi)=\exp\left(-\sum_{j=0}^{\infty}\left|r_1c_j+r_2c_{j+1}\right|^{\alpha}-\left|r_2\right|^{\alpha}\right),
 \end{equation}
and
 \begin{equation}\label{cnrm1}
  c_n(r_1,r_2)=\frac{1}{n}\sum_{j=1}^{n}\exp\left(ir_1 X_j+ir_2 X_{j+1}\right),\quad n=T-1.
 \end{equation}
In this paper, we use the ECF method with an exponential weighting function, then the procedure is to choose $\psi$ to minimize
\begin{equation}
 \label{eqI1m1}
 \int\int\left|c_{n}(r_1,r_2)-c(r_1,r_2;\psi)\right|^{2}\exp\left(-r_{1}^{2}-r_{2}^{2}\right)\ \ud r_1\ud r_2,
 \end{equation}
where $c(r_1,r_2;\psi)$ and $c_n(r_1,r_2)$ are given by (\ref{crpsim1}) and (\ref{cnrm1}). The implementation of the ECF method essentially requires minimizing (\ref{eqI1m1}), which involves double integrals. Consequently, we use a Monte Carlo Integration method to evaluate the multiple integral (\ref{eqI1m1}).
\subsection{Empirical choice of the size of the moving blocks}\noindent
In this Section, we show the effects of the size of moving blocks $m$ and, then we determine the optimal value of $m$ suitable for each model presented in Table \ref{Tabmodel}. The simulation results are given in Tables \ref{tabmodel1ecf}, \ref{tabmodel2ecf}, \ref{tabmodel3ecf} and \ref{tabmodel4ecf}.
\begin{itemize}
\item In the first experiment, a symmetric $\alpha$-stable ARFISMA(0, d, 0)(0, D, 0) model is considered (i.e the model without short memory parameters), and the results are summarized in Table \ref{tabmodel1ecf}. We see that, for all the values of $m$, the ECF method performs very well as the RMSE and the MAE are small. However, this performance varies with the value of $m$. Indeed, for the long-memory parameters $d$ and $D$, the best estimates are obtained for $m=1$ whereas for the innovation parameter $\alpha$ the value $m=4$ is the best one. Nevertheless, in terms of RMSE and MAE the difference between $m=1$ and $m=4$ for the parameter $\alpha$ is relatively small (respectively $1.11\%$ and $1.18\%$). Therefore, we consider the value $m=1$ as the optimal value of the size of moving blocks to estimate simultaneously all the parameters of Model 1.
\item In the second experiment, we introduce in the previous model a short memory part, and the results are displayed in Tables \ref{tabmodel2ecf}, \ref{tabmodel3ecf} and \ref{tabmodel4ecf}. Firstly, in these tables we observe that the RMSE and the MAE of the estimates are larger for all values of $m$ compared to those obtained in Table \ref{tabmodel1ecf}. Consequently, the AR and MA components can also be a source of the bias of the estimators. This phenomenon is already observed in the literature (e.g. Boutahar {\it et al.}\cite{BMN2007}, Diongue
and Gu\'egan \cite{DG2008}). On the other hand, the results are very bad for $m=1$, $2$, $3$ and $4$, and we observe a significant improvement from $m=5$. In general, we remark that the value $m=6$ seems to give better performance. Hence, we consider the value $m=6$ as the optimal value of the size of moving blocks to estimate simultaneously all the parameters of the models with short memory components.
\end{itemize}

\noindent In view of the simulation results, we can conclude that the efficiency of the ECF method depend on the choice of the size of moving blocks. Hence, we need to determine the optimal value of $m$. Thus, for the stable ARFISMA(0, d, 0)(0, D, 0) model, i.e. without short memory components, the value $m=1$ is the optimal value of $m$ whereas when the model has a short memory part the value $m=6$ is the best one. In the Section below, the estimates with these optimal values of $m$ are compared to those obtained by the Two-Step Method (TSM).
\begin{table}[H]
\begin{center}
\begin{tabular}{p{0.5cm} p{2.5cm} l  l l p{0.3cm} l l l l l p{0.5cm}}
	\hline
  &&
	 \multicolumn{2}{l}{$m=1$}&&&&&\multicolumn{2}{l}{$m=2$}&\\
	 \cline{3-5}\cline{9-11}
	 & Statistics	  &$\widehat{\alpha}$&$\widehat{d}$&$\widehat{D}$&&&&$\widehat{\alpha}$&$\widehat{d}$&$\widehat{D}$&\\
	\hline
	 &Mean &$1.615$&$0.132$&$0.170$&&  &&$1.594$&$0.130$&$0.155$&\\
     &RMSE &$0.015$&$0.017$&$0.029$&&  &&$0.006$&$0.019$&$0.044$&\\
     &MAE  &$0.015$&$0.017$&$0.029$&&  &&$0.005$&$0.019$&$0.044$&
	
\end{tabular}

\begin{tabular}{p{0.5cm} p{2.5cm} l  l l p{0.3cm} l l l l l p{0.5cm} }
	 &&
	 \multicolumn{2}{l}{$m=3$}&&&&&\multicolumn{2}{l}{$m=4$}&&\\
	 \cline{3-5}\cline{9-11}
	     &&$\widehat{\alpha}$&$\widehat{d}$&$\widehat{D}$&&&&$\widehat{\alpha}$&$\widehat{d}$&$\widehat{D}$&\\
	\hline
	  &	Mean &$1.573$&$0.128$&$0.136$&&  &&$1.596$&$0.128$&$0.145$&\\
        &  RMSE &$0.026$&$0.021$&$0.063$&&  &&$0.004$&$0.021$&$0.054$&\\
  & MAE  &$0.026 $&$0.021$&$0.063$&&  &&$0.004$&$0.021$&$0.054$&
	
\end{tabular}
\begin{tabular}{p{0.5cm} p{2.5cm} l  l l p{0.3cm} l l l l l p{0.5cm} }
	 &&
	 \multicolumn{2}{l}{$m=5$}&&&&&\multicolumn{2}{l}{$m=6$}&&\\
	 \cline{3-5}\cline{9-11}
	     &&$\widehat{\alpha}$&$\widehat{d}$&$\widehat{D}$&&&&$\widehat{\alpha}$&$\widehat{d}$&$\widehat{D}$&\\
	\hline

	  & Mean &$ 1.591$&$ 0.131$&$0.144$&&  &&$1.593$&$0.129$&$0.144$&\\
    & RMSE &$0.009$&$0.018$&$ 0.055$&&  &&$0.007$&$0.020$&$0.055$&\\
    & MAE  &$0.008$&$0.018 $&$0.055$&&  &&$0.006$&$0.020$&$0.055$&

\end{tabular}
\begin{tabular}{p{0.5cm} p{2.5cm} l  l l p{0.3cm} l l l l l p{0.5cm}}
	 &&
	 \multicolumn{2}{l}{$m=7$}&&&&&\multicolumn{2}{l}{$m=8$}&&\\
	 \cline{3-5}\cline{9-11}
	     &&$\widehat{\alpha}$&$\widehat{d}$&$\widehat{D}$&&&&$\widehat{\alpha}$&$\widehat{d}$&$\widehat{D}$&\\
	\hline
	 & Mean &$1.596$&$0.127$&$0.145$&&  &&$1.601$&$0.125$&$0.148$&\\
    & RMSE &$0.005$&$0.022$&$0.054$&&  &&$0.006$&$0.024$&$0.051$&\\
  &  MAE  &$0.004$&$0.022$&$0.054$&&  &&$0.004$&$0.024$&$0.051$&\\

     \hline
\end{tabular}\\
\caption{ECF estimation for model $1$, for $m=1$ to $m=8$.}
\label{tabmodel1ecf}
\end{center}
\end{table}

\begin{table}[H]
\begin{center}
\begin{tabular}{p{0.2cm} p{2.5cm} l l l l l l l l l l p{0.2cm}}
	\hline 
	 &&
	 \multicolumn{2}{l}{$m=1$}&&&&&\multicolumn{2}{l}{$m=2$}&&&\\
	 \cline{3-6}\cline{9-12}
	 &Statistics	  &$\widehat{\alpha}$&$\widehat{d}$&$\widehat{D}$&$\widehat{\phi}$&&&$\widehat{\alpha}$&$\widehat{d}$&$\widehat{D}$&$\widehat{\phi}$&\\
	\hline
	  &Mean &$1.726$&$0.217$&$0.233$&$0.464$&& &$1.683$&$0.125$&$0.252$&$0.633$&\\
        &RMSE &$0.288$&$0.208$&$0.263$&$0.366$&& &$0.171$&$0.117$&$0.132$&$0.192$&\\
     &MAE  &$0.168$&$0.174$&$0.147$&$0.292$&& &$0.122$&$0.100$&$0.089$ &$0.162$& 
\end{tabular}

\begin{tabular}{p{0.2cm} p{2.5cm} l l l l l l l l l l p{0.2cm}}
	
	 &&
	 \multicolumn{2}{l}{$m=3$}&&&&&\multicolumn{2}{l}{$m=4$}&&&\\
	 \cline{3-6}\cline{9-12}
	 	  &&$\widehat{\alpha}$&$\widehat{d}$&$\widehat{D}$&$\widehat{\phi}$&&&$\widehat{\alpha}$&$\widehat{d}$&$\widehat{D}$&$\widehat{\phi}$&\\
	\hline
	
	 &Mean &$1.663$&$0.125$&$0.243$&$0.641$& &&$1.650$&$0.128$&$0.235$&$0.644$&\\
         &RMSE &$0.135$&$0.109$&$0.101$&$0.167$& &&$0.120$&$0.106$&0.087&$0.154$&\\
    &MAE  &$0.100$&$0.095$&$0.073$&$0.140$& &&$0.085$&$0.093$&$0.063$&$0.130$&
	
\end{tabular}

\begin{tabular}{p{0.2cm} p{2.5cm} l l l l l l l l l l p{0.2cm}}
		 &&
	 \multicolumn{2}{l}{$m=5$}&&&&&\multicolumn{2}{l}{$m=6$}&&&\\
	 \cline{3-6}\cline{9-12}
	 	 &&$\widehat{\alpha}$&$\widehat{d}$&$\widehat{D}$&$\widehat{\phi}$&&&$\widehat{\alpha}$&$\widehat{d}$&$\widehat{D}$&$\widehat{\phi}$&\\
	\hline
	
	 &Mean &$1.634$&$0.160$&$0.228$&$0.606$&& &$1.624$&$0.169$&$0.222$&$0.591$&\\
     &RMSE &$0.098$&$0.097$&$0.070$&$0.119$&& &$0.082$&$0.099$&$0.057$&$0.103$&\\
     &MAE  &$0.068$&$0.078$&$0.047$&$0.089$&& &$0.061$&$0.079$&$0.040$&$0.083$&
	\end{tabular}
	
\begin{tabular}{p{0.2cm} p{2.5cm} l l l l l l l l l l p{0.2cm}}
		 &&
	 \multicolumn{2}{l}{$m=7$}&&&&&\multicolumn{2}{l}{$m=8$}&&&\\
	 \cline{3-6}\cline{9-12}
	 	 &&$\widehat{\alpha}$&$\widehat{d}$&$\widehat{D}$&$\widehat{\phi}$&&&$\widehat{\alpha}$&$\widehat{d}$&$\widehat{D}$&$\widehat{\phi}$&\\
	\hline
		 &Mean &$1.571$&$0.250$&$0.187$&$0.394$&& &$1.568$&$0.248$&$0.189$&$0.395$&\\
         &RMSE &$0.092$&$0.135$&$0.059$&$0.258$&& &$0.093$&$0.132$&$0.056$&$0.247$&\\
     &MAE  &$0.065$&$0.114$&$0.043$&$0.219$& &&$0.067$&$0.110$&$0.041$&$0.210$&\\
   
	\hline
\end{tabular}
\caption{ECF estimation for model $2$, for $m=1$ to $m=8$.}
\label{tabmodel2ecf}
\end{center}
\end{table}
\begin{table}[H]
\begin{center}
\begin{tabular}{p{0.2cm} p{2.5cm} l l l l l l l l l l p{0.2cm}}
	\hline 
	 &&
	 \multicolumn{2}{l}{$m=1$}&&&&&\multicolumn{2}{l}{$m=2$}&&&\\
	 \cline{3-6}\cline{9-12}
	 &Statistics	  &$\widehat{\alpha}$&$\widehat{d}$&$\widehat{D}$&$\widehat{\theta}$&&&$\widehat{\alpha}$&$\widehat{d}$&$\widehat{D}$&$\widehat{\theta}$&\\
	\hline
		 &Mean &$1.694$&$0.177$&$0.250$&$0.339$&& &$1.672$&$0.165$&$0.240$&$0.399$&\\
         &RMSE &$0.209$&$0.064$&$0.130$&$0.153$& &&$0.156$&$0.056$&$0.094$&$0.059$&\\
     &MAE  &$0.133$&$0.046$&$0.077$&$0.090$& &&$0.112$&$ 0.041$&$0.067$&$0.041$&
	
\end{tabular}

\begin{tabular}{p{0.2cm} p{2.5cm} l l l l l l l l l l p{0.2cm}}
		 &&
	 \multicolumn{2}{l}{$m=3$}&&&&&\multicolumn{2}{l}{$m=4$}&&&\\
	 \cline{3-6}\cline{9-12}
	 	  &&$\widehat{\alpha}$&$\widehat{d}$&$\widehat{D}$&$\widehat{\theta}$&&&$\widehat{\alpha}$&$\widehat{d}$&$\widehat{D}$&$\widehat{\theta}$&\\
	\hline
		 &Mean &$1.659$&$0.169$&$0.239$&$0.394$& &&$1.637$&$0.170$&$0.230$&$0.398$&\\
         &RMSE &$0.135$&$0.057$&$0.084$&$0.052$& &&$0.102$&$ 0.059$&$0.070$&$0.054$&\\
     &MAE  &$0.097$&$0.042$&$0.059$&$0.039$& &&$0.074$&$0.044$&$0.047$&$0.041$&
	
\end{tabular}

\begin{tabular}{p{0.2cm} p{2.5cm} l l l l l l l l l l p{0.2cm}}
		 &&
	 \multicolumn{2}{l}{$m=5$}&&&&&\multicolumn{2}{l}{$m=6$}&&&\\
	 \cline{3-6}\cline{9-12}
	 	 &&$\widehat{\alpha}$&$\widehat{d}$&$\widehat{D}$&$\widehat{\theta}$&&&$\widehat{\alpha}$&$\widehat{d}$&$\widehat{D}$&$\widehat{\theta}$&\\
	\hline
		 &Mean &$1.630$&$0.170$&$0.225$&$0.398$&& &$1.534$&$0.126$&$0.143$&$0.343$&\\
        &RMSE &$0.095$&$0.064$&$0.066$&$0.059$& &&$0.066$&$0.023$&$0.056$&$0.057$&\\
    &MAE  &$0.067$&$0.046$&$0.043$&$0.044$& &&$0.065$&$0.023$&$0.056$&$0.056$&
	\end{tabular}

\begin{tabular}{p{0.2cm} p{2.5cm} l l l l l l l l l l p{0.2cm}}
		 &&
	 \multicolumn{2}{l}{$m=7$}&&&&&\multicolumn{2}{l}{$m=8$}&&&\\
	 \cline{3-6}\cline{9-12}
	 	 &&$\widehat{\alpha}$&$\widehat{d}$&$\widehat{D}$&$\widehat{\theta}$&&&$\widehat{\alpha}$&$\widehat{d}$&$\widehat{D}$&$\widehat{\theta}$&\\
	\hline
	 &Mean &$1.622$&$0.171$&$0.223$&$0.396$&& &$1.619$&$0.170$&$0.222$&$0.396$&\\
    &RMSE &$0.090$&$0.071$&$0.070$&$0.067$& &&$0.083$&$0.069$&$0.075$&$0.068$&\\
     &MAE  &$0.063$&$0.049$&$0.043$&$0.051$& &&$0.061$&$0.050$&$0.044$&$0.053$&\\
    \hline
\end{tabular}
\caption{ECF estimation for model $3$, for $m=1$ to $m=8$.}
\label{tabmodel3ecf}
\end{center}
\end{table}


\begin{table}[H]
\begin{center}
\begin{tabular}{p{1.5cm} l l l l l l l l l l l }
	\hline
	 &
	 \multicolumn{2}{l}{$m=1$}&&&&&\multicolumn{2}{l}{$m=2$}&&&\\
	 \cline{2-6}\cline{8-12}
Statistics&$\widehat{\alpha}$&$\widehat{d}$&$\widehat{D}$&$\widehat{\phi}$&$\widehat{\theta}$&&$\widehat{\alpha}$&$\widehat{d}$&$\widehat{D}$&$\widehat{\phi}$&$\widehat{\theta}$\\
\hline
     Mean &$1.488$&$0.356$&$0.0008$&$0.002$&$0.331$& &$1.508$&$0.275$&$0.026$&$0.276$&$0.411$\\
     RMSE &$0.111$&$0.206$&$0.199$&$0.597$&$0.069$& &$0.091$&$0.125$&$0.173$&$0.323$&$0.012$\\
   MAE  &$0.111$&$0.206$&$0.199$&$0.597$&$0.068$& &$0.091$&$0.125$&$0.173$&$0.323$&$0.011$
\end{tabular}

\begin{tabular}{p{1.5cm} l l l l l l l l l l l }
	 &
	 \multicolumn{2}{l}{$m=3$}&&&&&\multicolumn{2}{l}{$m=4$}&&&\\
	 \cline{2-6}\cline{8-12}
Statistics&$\widehat{\alpha}$&$\widehat{d}$&$\widehat{D}$&$\widehat{\phi}$&$\widehat{\theta}$&&$\widehat{\alpha}$&$\widehat{d}$&$\widehat{D}$&$\widehat{\phi}$&$\widehat{\theta}$\\
\hline
     Mean &$1.513$&$0.262$&$0.045$&$0.288$&$0.402$& &$1.514$&$0.222$&$0.089$&$0.349$&$0.365$\\
     RMSE &$0.086$&$0.112$&$0.155$&$0.312$&$0.005$& &$0.085$&$0.073$&$0.110$&$0.250$&$0.034$\\
     MAE  &$0.086$&$0.112$&$0.154$&$0.311$&$0.004$& &$0.085$&$0.072$&$0.110$&$0.250$&$0.034$
\end{tabular}

\begin{tabular}{p{1.5cm} l l l l l l l l l l l}
	 &
	 \multicolumn{2}{l}{$m=5$}&&&&&\multicolumn{2}{l}{$m=6$}&&&\\
	 \cline{2-6}\cline{8-12}
Statistics&$\widehat{\alpha}$&$\widehat{d}$&$\widehat{D}$&$\widehat{\phi}$&$\widehat{\theta}$&&$\widehat{\alpha}$&$\widehat{d}$&$\widehat{D}$&$\widehat{\phi}$&$\widehat{\theta}$\\
\hline
     Mean &$1.520$&$0.178$&$0.118$&$0.451$&$0.351$& &$1.513$&$0.147$&$0.133$&$0.510$&$0.334$\\
     RMSE &$0.080$&$0.031$&$0.081$&$0.151$&$0.049$& &$0.087$&$0.017$&$0.066$&$0.095$&$0.066$\\
     MAE  &$0.079$&$0.028$&$0.081$&$0.148$&$0.048$& &$0.086$&$0.013$&$0.066$&$0.089$&$0.065$
\end{tabular}

\begin{tabular}{p{1.5cm} l l l l l l l l l l l}
	
	 &
	 \multicolumn{2}{l}{$m=7$}&&&&&\multicolumn{2}{l}{$m=8$}&&&\\
	 \cline{2-6}\cline{8-12}
Statistics&$\widehat{\alpha}$&$\widehat{d}$&$\widehat{D}$&$\widehat{\phi}$&$\widehat{\theta}$&&$\widehat{\alpha}$&$\widehat{d}$&$\widehat{D}$&$\widehat{\phi}$&$\widehat{\theta}$\\
\hline
     Mean &$1.492$&$0.135$&$0.139$&$0.525$&$0.305$& &$1.456$&$0.132$&$0.142$&$0.503$&$0.268$\\
    RMSE &$0.109$&$0.028$&$0.062$&$0.089$&$0.096$& &$0.148$&$0.041$&$0.060$&$0.124$&$0.135$\\
     MAE  &$0.107$&$0.022$&$0.060$&$0.077$&$0.094$& &$0.143$&$0.032$&$0.057$&$0.105$&$0.131$\\
  
 \hline
\end{tabular}
\caption{ECF estimation for model $4$, for $m=1$ to $m=8$.}
\label{tabmodel4ecf}
\end{center}
\end{table}
\subsection{Monte Carlo study comparing ECF procedure and Two-Step Method}\noindent
In this section, we compare the finite sample performance of the ECF method to the Two-Step Method discussed in Section \ref{secMCMCW}. The simulation results are summarized in Tables \ref{tabcompmod1}, \ref{tabcompmod2}, \ref{tabcompmod3} and \ref{tabcompmod4}.
\begin{itemize}
\item When we consider Table \ref{tabcompmod1} displaying the estimation results for ARFISMA$(0,\ d,\ 0)\times(0,\ D,\ 0)_{s}$-S$\alpha$S model, we see that all methods perform well, as the RMSE and MAE are in most cases small. The estimation parameters from the TSM are better than those obtained by the ECF approach.
\item Tables \ref{tabcompmod2}, \ref{tabcompmod3} and \ref{tabcompmod4} display the estimation results, when there are long memory and short memory components simultaneously. From these tables, we observe that the estimation results are acceptable for all methods, in most cases. However, it can be remarked that the estimation of the parameters is disturbed. Indeed, the RMSE and MAE obtained from these tables are larger than those presented in Table \ref{tabcompmod1}. Hence, this shows that the impact of the short memory parameters in the estimates. Comparing the two methods, it can be seen that better estimates are obtained, in general, from the ECF procedure than the TSM. Moreover, an advantage of the ECF method is that, as a consistent procedure it estimates all of the parameters simultaneously.   
 \end{itemize}
\begin{table}[H]
\begin{center}
\begin{tabular}{p{0.3cm} p{1.7cm} p{1.3cm} p{1.3cm} c c c c p{1.3cm} p{1.3cm} p{1.3cm} }
	\hline
	 &&
	 \multicolumn{2}{l}{ECF (m=1)}&&&&&&\multicolumn{2}{l}{TSM}\\
	 \cline{3-5}\cline{9-11}
	 	&Statistics&$\hat{\alpha}$&$\hat{d}$&$\hat{D}$&&&&$\hat{\alpha}$&$\hat{d}$&$\hat{D}$\\
	\hline
	 &Mean &$1.6158$&$0.1321$&$0.1707$&& &&$1.6105$&$0.1565$&$0.1844$\\
     &RMSE &$0.0158$&$0.0178$&$0.0292$&& &&$0.0108$&$0.0132$&$0.0193$\\
    &MAE  &$0.0158$&$0.0178$&$0.0292$&& &&$0.0105$&$0.0106$&$0.0164$\\
     \hline
\end{tabular}
\caption{Monte Carlo study to compare ECF procedure with TSM for model $1$.}
\label{tabcompmod1}
\end{center}
\end{table}

\begin{table}[H]
\begin{center}
\begin{tabular}{p{0.3cm} p{1.5cm} p{0.8cm} p{0.8cm} p{0.8cm} c  c c p{0.8cm} p{0.8cm} p{0.8cm} p{0.8cm} p{0.1cm}}
	\hline
	 &&
	 \multicolumn{2}{l}{ECF (m=6)}&&&&&&\multicolumn{2}{l}{TSM}&&\\
	 \cline{3-6}\cline{9-12}
	 	 &Statistics&$\hat{\alpha}$&$\hat{d}$&$\hat{D}$&$\hat{\phi}$&&&$\hat{\alpha}$&$\hat{d}$&$\hat{D}$&$\hat{\phi}$&\\
	\hline
		 &Mean &$1.624$&$0.169$&$0.222$&$0.591$&& &$1.603$&$0.164$&$0.200$&$0.587$&\\
       &RMSE &$0.082$&$0.099$&$0.057$&$0.103$&& &$0.008$&$0.164$&$0.079$&$0.104$&\\
    &MAE  &$0.061$&$0.079$&$0.040$&$0.083$&& &$0.006$&$0.077$&$0.026$&$0.073$&\\
        \hline
	\end{tabular}
\caption{Monte Carlo study to compare ECF procedure with TSM for model $2$.}
	\label{tabcompmod2}
\end{center}
\end{table}
\begin{table}[H]
\begin{center}
\begin{tabular}{p{0.3cm} p{1.5cm} p{0.8cm} p{0.8cm} p{0.8cm} c  c c  p{0.8cm} p{0.8cm} p{0.8cm} p{0.8cm} p{0.1cm}}
	\hline
	 &&
	 \multicolumn{2}{l}{ECF (m=6)}&&&&&&\multicolumn{2}{l}{TSM}&&\\
	 \cline{3-6}\cline{9-12}
	 	 &Statistics&$\hat{\alpha}$&$\hat{d}$&$\hat{D}$&$\hat{\theta}$&&&$\hat{\alpha}$&$\hat{d}$&$\hat{D}$&$\hat{\theta}$&\\
	\hline

	 &Mean &$1.534$&$0.126$&$0.143$&$0.343$&& &$1.256$&$0.157$&$0.204$&$0.395$&\\
    &RMSE &$0.066$&$0.023$&$0.056$&$0.057$&& &$0.344$&$0.064$&$0.068$&$0.048$&\\
    &MAE  &$0.065$&$0.023$&$0.056$&$0.056$&& &$0.343$&$0.026$&$0.022$&$0.029$&\\
	\hline
\end{tabular}
\caption{Monte Carlo study to compare ECF procedure with TSM for model $3$.}
\label{tabcompmod3}
\end{center}
\end{table}

\begin{table}[H]
\begin{center}
\begin{tabular}{p{1.5cm} c  c  c c c c c c c c c }
	\hline
	 &&
	 \multicolumn{2}{l}{ECF (m=6)}&&&&&&\multicolumn{2}{l}{TSM}&\\
	 \cline{2-6}\cline{8-12}
Statistics&$\hat{\alpha}$&$\hat{d}$&$\hat{D}$&$\hat{\phi}$&$\hat{\theta}$&&$\hat{\alpha}$&$\hat{d}$&$\hat{D}$&$\hat{\phi}$&$\hat{\theta}$\\
\hline
  Mean &$1.513$&$0.147$&$0.133$&$0.510$&$0.334$& &$1.286$&$0.146$&$0.182$&$0.610$&$0.172$\\
 RMSE &$0.087$&$0.017$&$0.066$&$0.095$&$0.066$& &$0.316$&$0.050$&$0.025$&$0.062$&$0.233$\\
 MAE  &$0.086$&$0.013$&$0.066$&$0.089$&$0.065$& &$0.313$&$0.040$&$0.021$&$0.049$&$0.227$\\
  \hline
\end{tabular}
\caption{Monte Carlo study to compare ECF with TSM for model $4$.}
\label{tabcompmod4}
\end{center}
\end{table}
\section{Conclusion}\label{Sec.6}\noindent
In this article, we studied the ECF method developed by Knight and Yu \cite{KY}. The method is illustrated to estimate simultaneously the parameters of stable ARFISMA models introduced by Diongue {\it et al.} \cite{DDM}. Under some conditions, the resulting estimators are shown to be consistent and asymptotically normally distributed. For comparison purpose, we have also consider a Two-Step Method (TSM) composed by the MCMC Whittle procedure developed in Ndongo {\it et al.} \cite{NDDS} and the MLE approach introduced in Alvarez and Olivares \cite{AO2005}. Finite sample behaviours of these methods were studied through Monte Carlo simulations. It is found in general, that the ECF method is better than the TSM, particularly, when short memory components are present in the model. The simulation results show also the impact of the size of moving blocks and the short memory parameters in the estimate.\\

\noindent It is very important to remark that there are two difficulties with the implementation of the characteristic function-based estimators (the use of optimal weighting function and large set of moment conditions) as noted in Carrasco {\it et al.} (\cite{CCFG2007}). They solved the two problems in the framework of the GMM with continuum of moment conditions. However, the models considered here is non-Markovian, and then the conditional Characteristic Function (CCF) is unknown and difficult to estimate. Hence, the GMM based on the CCF is not feasible. On the other hand, the Joint Characteristic Function (JCF) of stable ARFISMA models is known but the GMM-JCF will not be efficient, since the process is not Markovian (see Carrasco {\it et al.} (\cite{CCFG2007}) for more details). Thus, it will be interesting to compare the sub-optimal ECF method with an exponential weighting function and the GMM based on the JCF, through Monto Carlo simulations. This problem will be examined in a forthcoming paper.

\section*{Appendix A: proof of Theorem \ref{Th3.1}}
$$c\left(r_1,\ldots,r_{m+1};\psi\right)=\mathbb{E}\left[\exp\left\{ir_{1}X_{t-m}+ir_{2}X_{t-m+1}+\cdots+ir_{m+1}X_{t}\right\}\right]$$
 Using equation (\ref{MArepres}), we can write
 $$c\left(r_1,\ldots,r_{m+1};\psi\right)=\mathbb{E}\left[\exp\left\{ir_{1}\sum_{j=0}^{\infty}c_jZ_{t-m-j}+ir_{2}\sum_{j=0}^{\infty}c_jZ_{t-m+1-j}+\ldots+ir_{m+1}\sum_{j=0}^{\infty}c_jZ_{t-j}\right\}\right]$$
 Developing and regrouping the terms in $Z_{t-m-j}$ for $j=0$ to $\infty$, we obtain
 \begin{eqnarray} c\left(r;\psi\right)&=&\mathbb{E}\left[\exp\left\{i\sum_{j=0}^{\infty}(\sum_{l=0}^{m}r_{l+1}c_{j+l})Z_{t-m-j}+i\sum_{h=0}^{m-1}r_{h+2}c_{h}Z_{t-m+1}+\cdots+ir_{m+1}c_0Z_{t}\right\}\right]\nonumber\\ &=&\mathbb{E}\left[\exp\left\{i\sum_{j=0}^{\infty}(\sum_{l=0}^{m}r_{l+1}c_{j+l})Z_{t-m-j}+i\sum_{l=2}^{m+1}(\sum_{h=0}^{m+1-l}r_{h+l}c_{h})Z_{t-m-1+l}\right\}\right]\nonumber
 \end{eqnarray}
 Using the independence of the $Z_t$, we have
 \begin{eqnarray} c\left(r;\psi\right)&=&\mathbb{E}\left[\exp\left\{i\sum_{j=0}^{\infty}(\sum_{l=0}^{m}r_{l+1}c_{j+l})Z_{t-m-j}\right\}\right]\times\mathbb{E}\left[\exp\left\{i\sum_{l=2}^{m+1}(\sum_{h=0}^{m+1-l}r_{h+l}c_{h})Z_{t-m-1+l}\right\}\right]\nonumber
 \end{eqnarray}
 Let $$A=\mathbb{E}\left[\exp\left\{i\sum_{j=0}^{\infty}(\sum_{l=0}^{m}r_{l+1}c_{j+l})Z_{t-m-j}\right\}\right]$$  $$B=\mathbb{E}\left[\exp\left\{i\sum_{l=2}^{m+1}(\sum_{h=0}^{m+1-l}r_{h+l}c_{h})Z_{t-m-1+l}\right\}\right]$$
Calculate $A$ and $B$
\begin{eqnarray}
B&=&\mathbb{E}\left[\exp\left\{i\sum_{l=2}^{m+1}(\sum_{h=0}^{m+1-l}r_{h+l}c_{h})Z_{t-m-1+l}\right\}\right]\nonumber\\
&=&\mathbb{E}\left[\prod_{l=2}^{m+1}\exp\left\{i(\sum_{h=0}^{m+1-l}r_{h+l}c_{h})Z_{t-m-1+l}\right\}\right]\nonumber\\
&=&\prod_{l=2}^{m+1}\mathbb{E}\left[\exp\left\{i(\sum_{h=0}^{m+1-l}r_{h+l}c_{h})Z_{t-m-1+l}\right\}\right],\ \textrm{for indenpence of the}\ Z_t\nonumber\\
&=&\prod_{l=2}^{m+1} \Phi_{Z}\left(\sum_{h=0}^{m+1-l}r_{h+l}c_{h}\right)\nonumber\\
&=&\prod_{l=2}^{m+1}\exp\left\{-\left|\sum_{h=0}^{m+1-l}r_{h+l}c_{h}\right|^{\alpha}\right\}\nonumber\\
&=&\exp\left\{-\sum_{l=2}^{m+1}\left|\sum_{h=0}^{m+1-l}r_{h+l}c_{h}\right|^{\alpha}\right\}\nonumber
\end{eqnarray}
For calculation of $A$, let $Y=\displaystyle\sum_{j=0}^{\infty}(\sum_{l=0}^{m}r_{l+1}c_{j+l})Z_{t-m-j}$ and remark that $A=\Phi_{Y}(1)$, where $\Phi_Y$ is the CF of $Y$. \\The sequence $\left\{Z_t\right\}$ is i.i.d symmetric $\alpha$-stable and $\displaystyle\sum_{j=0}^{\infty}\left|\sum_{l=0}^{m}r_{l+1}c_{j+l}\right|^{\alpha}<\infty$, then according to Cline (\cite{Cli}) or Cline and Brockwell (\cite{CliBroc}), we have Y is also symmetric $\alpha$-stable and
$$Y\stackrel{d}{=}\left(\sum_{j=0}^{\infty}\left|\sum_{l=0}^{m}r_{l+1}c_{j+l}\right|^{\alpha}\right)^{1/\alpha}Z_1.$$
As consequently
\begin{eqnarray}
\Phi_{Y}\left(u\right)&=&\mathbb{E}\left[\exp\left\{i u\sum_{j=0}^{\infty}(\sum_{l=0}^{m}r_{l+1}c_{j+l})Z_{t-m-j}\right\}\right]\nonumber\\
&=&\mathbb{E}\left[\exp\left\{i u\left(\sum_{j=0}^{\infty}\left|\sum_{l=0}^{m}r_{l+1}c_{j+l}\right|^{\alpha}\right)^{1/\alpha}Z_{1}\right\}\right]\nonumber\\
&=&\Phi_{Z_1}\left(u\left(\sum_{j=0}^{\infty}\left|\sum_{l=0}^{m}r_{l+1}c_{j+l}\right|^{\alpha}\right)^{1/\alpha}\right)\nonumber\\
&=&\exp\left\{-\left|u\right|^{\alpha}\sum_{j=0}^{\infty}\left|\sum_{l=0}^{m}r_{l+1}c_{j+l}\right|^{\alpha}\right\}\nonumber
\end{eqnarray}
Hence $A=\Phi_{Y}\left(1\right)=\exp\left\{-\displaystyle\sum_{j=0}^{\infty}\left|\sum_{l=0}^{m}r_{l+1}c_{j+l}\right|^{\alpha}\right\}.$\\
Finally
\begin{eqnarray}
c\left(r;\psi\right)&=&A\times B\nonumber\\
&=&\exp\left\{-\sum_{j=0}^{\infty}\left|\sum_{l=0}^{m}r_{l+1}c_{j+l}\right|^{\alpha}\right\}\times\exp\left\{-\sum_{l=2}^{m+1}\left|\sum_{h=0}^{m+1-l}r_{h+l}c_{h}\right|^{\alpha}\right\}\nonumber\\
&=&\exp\left\{-\sum_{j=0}^{\infty}\left|\sum_{l=0}^{m}r_{l+1}c_{j+l}\right|^{\alpha}-\sum_{l=2}^{m+1}\left|\sum_{h=0}^{m+1-l}r_{h+l}c_{h}\right|^{\alpha}\right\}\nonumber.
\end{eqnarray}
\hspace{\stretch{1}}\rule{1ex}{1ex}
\section*{Appendix B: proof of Theorem \ref{Th3.9}}
In this Section, we present the proof of Theorem \ref{Th3.9} which consists, under Assumptions \ref{Assump3.3} and \ref{Assump3.4}, to verify the hypothesis of Theorem 2.1 of Knight and Yu \cite{KY}, namely Assumptions ($A_2$), ($A_3$), ($A_5$)  and ($A_6$).
\begin{enumerate}
\item[$(A_2$):] With probability one, $I_n(\psi)$ is twice continuously differentiable under the integral sign with respect to $\psi$ over $\Psi$.\\\\
Let $f(r;\psi)=\left|c_{n}(r)-c(r;\psi)\right|^{2}g(r)$, then we have:
\begin{eqnarray}
\left|\frac{\partial}{\partial\psi}f\left(r;\psi\right)\right|&=&2~g(r)\left|c_{n}(r)-c(r;\psi)\right|\left|\frac{\partial~c(r;\psi)}{\partial\psi}\right|\nonumber\\
                  &\leq&2~g(r)\left(\left|c_{n}(r)\right|+\left|c(r;\psi)\right|\right)\left|\frac{\partial~c(r;\psi)}{\partial\psi}\right|\nonumber\\
                  &\leq& 4~g(r)\left|\frac{\partial~c(r;\psi)}{\partial\psi}\right|.\nonumber
\end{eqnarray}
Defining $$U(r;\psi)=\sum_{j=0}^{\infty}\left|\sum_{l=0}^{m}r_{l+1}c_{j+l}\right|^{\alpha}+\sum_{l=2}^{m+1}\left|\sum_{h=0}^{m+1-l}r_{h+l}c_{h}\right|^{\alpha},$$
we can rewrite the previous inequality as:
\begin{eqnarray}
\left|\frac{\partial}{\partial\psi}f\left(r;\psi\right)\right|&\leq& 4~g(r)\exp(-U(r;\psi))\left|\frac{\partial~U(r;\psi)}{\partial\psi}\right|\nonumber\\
                                           &\leq& 4~g(r)\left|\frac{\partial~U(r;\psi)}{\partial\psi}\right|.\nonumber
\end{eqnarray}
$\bullet$ {\bf Let us show that $I_n(\psi)$ is of class $\mathcal{C}^1$ with respect to $\delta=(d,\ D,\ \phi ,\ \theta,\ \Phi,\ \Theta)$},
where $\phi = (\phi_{1},\ldots,\phi_{p})$, $\theta = (\theta_{1},\ldots,\theta_{q})$, $\Phi=(\Phi_{1},\ldots,\Phi_{P})$ and $\Theta = (\Theta_{1},\ldots,\Theta_{Q})$.\\
Let $S(r;\psi)=\displaystyle\sum_{j=0}^{\infty} u_j(r;\psi)$, with $u_j(r;\psi)=\left|\displaystyle\sum_{l=0}^{m}r_{l+1}c_{j+l}\right|^{\alpha}$. Then we get:
\begin{eqnarray}
\left|\frac{\partial~u_j(r;\psi)}{\partial\delta}\right|&=&\alpha\left|\sum_{l=0}^{m}r_{l+1}c_{j+l}\right|^{\alpha-1}\left|\sum_{l=0}^{m}r_{l+1}\frac{\partial~c_{j+l}}{\partial\delta}\right|\nonumber\\
&\leq& 2\left(\sum_{l=0}^{m}|r_{l+1}c_{j+l}|\right)^{\alpha-1}\left(\sum_{l=0}^{m}|r_{l+1}a_{j+l}|\right),\ \textrm{with}\ a_j=\frac{\partial~c_{j}}{\partial\delta}.\nonumber
\end{eqnarray}
Applying H$\ddot{o}$lder's inequality to the previous relationship, we obtain:
\begin{eqnarray}
\left|\frac{\partial~u_j(r;\psi)}{\partial\delta}\right|&\leq& 2\left(\sum_{l=0}^{m}|r_{l+1}|^{\frac{\alpha}{\alpha-1}}\right)^{\alpha-1}\left(\sum_{l=0}^{m}|c_{j+l}|^{\alpha}\right)^{1-\frac{1}{\alpha}}\left(\sum_{l=0}^{m}|a_{j+l}|^{\alpha}\right)^{\frac{1}{\alpha}},\nonumber\\
&\leq& 2\left(\sum_{l=0}^{m}|r_{l+1}|\right)^{\alpha}\left(\sum_{l=0}^{m}|c_{j+l}|^{\alpha}\right)^{1-\frac{1}{\alpha}}\left(\sum_{l=0}^{m}|a_{j+l}|^{\alpha}\right)^{\frac{1}{\alpha}}.\nonumber
\end{eqnarray}
- There exists $l_0$ such that $|c_{j+l}|^{\alpha} \leq |c_{j+l_0}|^{\alpha}$, for all $l=0,\ldots,m$.
\begin{equation}\label{eql0}
  \Rightarrow \left(\sum_{l=0}^{m}|c_{j+l}|^{\alpha}\right)^{1-\frac{1}{\alpha}} \leq (m+1)^{1-\frac{1}{\alpha}}|c_{j+l_0}|^{\alpha-1}.
\end{equation}
- There exists $l_1$ such that $|a_{j+l}|^{\alpha} \leq |a_{j+l_1}|^{\alpha}$, for all $l=0,\ldots,m$.
\begin{equation}\label{eql1}
  \Rightarrow \left(\sum_{l=0}^{m}|a_{j+l}|^{\alpha}\right)^{\frac{1}{\alpha}} \leq (m+1)^{\frac{1}{\alpha}}|a_{j+l_1}|.
 \end{equation}
  Using expression found in (\ref{eql0}) and (\ref{eql1}), we get:
  \begin{eqnarray}
  \left|\frac{\partial~u_j(r;\psi)}{\partial\delta}\right| &\leq &2(m+1)\left(\sum_{l=0}^{m}|r_{l+1}|\right)^{\alpha}|c_{j+l_0}|^{\alpha-1}|a_{j+l_1}|\nonumber\\
        &\leq &(m+1)\left(\sum_{l=0}^{m}|r_{l+1}|\right)^{\alpha} w_j,\quad\textrm{with}\quad w_j=|c_{j+l_0}|^{2(\alpha-1)} + |a_{j+l_1}|^2.\nonumber
  \end{eqnarray}
  According to Assumption \ref{Assump3.3}, the series $\sum w_j$ is convergent. Consequently, we get $U(r;\ \psi)$ is differentiable with respect to $\delta$ and we can write:
  \begin{eqnarray}
\left|\frac{\partial~U(r;\psi)}{\partial\delta}\right|&=&\frac{\partial~S(r;\psi)}{\partial\delta}+\alpha\sum_{l=2}^{m+1}\left|\sum_{h=0}^{m+1-l}r_{h+l}c_{h}\right|^{\alpha-1}\left|\sum_{h=0}^{m+1-l}r_{h+l}\frac{\partial~c_h}{\partial\delta}\right|\nonumber\\
&\leq&(m+1)\left(\sum_{l=0}^{m}|r_{l+1}|\right)^{\alpha}\sum_{j=0}^{\infty} w_j\nonumber\\
&+&\alpha\sum_{l=2}^{m+1}\left[\left(\sum_{h=0}^{m+1-l}|r_{h+l}c_{h}|\right)^{\alpha-1}\left(\sum_{h=0}^{m+1-l}|r_{h+l}a_h|\right)\right]\nonumber\\
 &\leq& M~(m+1)\left(\sum_{l=0}^{m}|r_{l+1}|\right)^{\alpha}+\alpha\sum_{l=2}^{m+1}\left[\left(\sum_{h=0}^{m+1-l}|r_{h+l}c_{h}|\right)^{\alpha-1}\left(\sum_{h=0}^{m+1-l}|r_{h+l}a_h|\right)\right].\nonumber
\end{eqnarray}
\begin{equation}\label{eqAA}
\textrm{Let}\quad A=\alpha\sum_{l=2}^{m+1}\left[\left(\sum_{h=0}^{m+1-l}|r_{h+l}c_{h}|\right)^{\alpha-1}\left(\sum_{h=0}^{m+1-l}|r_{h+l}a_h|\right)\right].~~~~~~~~~~~~~~~~~~~~~~~~~~~~~~~~~~~~~~
\end{equation}
Applying H$\ddot{o}$lder's inequality to $A$, we have:
\begin{eqnarray}
 A &\leq& 2\sum_{l=2}^{m+1}\left[\left(\sum_{h=0}^{m+1-l}|r_{h+l}|\right)^{\alpha-1}\left(\sum_{h=0}^{m+1-l}|r_{h+l}|^2\right)^{1/2}\left(\sum_{h=0}^{m+1-l}|a_{h}|^2\right)^{1/2}\right]\nonumber\\
&\leq & 2\sum_{l=2}^{m+1}\left[\left(\sum_{h=0}^{m+1-l}|r_{h+l}|\right)^{\alpha-1}\left(\sum_{h=0}^{m+1-l}|r_{h+l}|\right)\left(\sum_{h=0}^{m+1-l}|a_{h}|^2\right)^{1/2}\right]\nonumber\\
&\leq & 2\sum_{l=2}^{m+1}\left[\left(\sum_{h=0}^{m+1-l}|r_{h+l}|\right)^{\alpha}\left(\sum_{h=0}^{+\infty}|a_{h}|^2\right)^{1/2}\right]\nonumber\\
&\leq & 2~M'\sum_{l=2}^{m+1}\left(\sum_{h=0}^{m+1-l}|r_{h+l}|\right)^{\alpha},\quad\textrm{using Assumption \ref{Assump3.3}}.\nonumber
\end{eqnarray}
So we get:
\begin{eqnarray}\label{eq2}
\left|\frac{\partial~U(r;\psi)}{\partial\delta}\right| &\leq & M~(m+1)\left(\sum_{l=0}^{m}|r_{l+1}|\right)^{\alpha} + 2~M'\sum_{l=2}^{m+1}\left(\sum_{h=0}^{m+1-l}|r_{h+l}|\right)^{\alpha}\nonumber\\
&\leq & M~(m+1)H_1(r) + 2~M'\sum_{l=2}^{m+1} H_1(r;l),\nonumber
\end{eqnarray}
where
\begin{equation}
\left(\sum_{l=0}^{m}|r_{l+1}|\right)^{\alpha}\leq H_1(r)=\left\{\begin{array}{ll}
\left(\displaystyle\sum_{l=0}^{m}|r_{l+1}|\right)^{2},\qquad \textrm{si}\ \displaystyle\sum_{l=0}^{m}|r_{l+1}|>1\\\nonumber\\
\displaystyle\sum_{l=0}^{m}|r_{l+1}|,\qquad \textrm{si}\ \displaystyle\sum_{l=0}^{m}|r_{l+1}|\leq 1.\\\nonumber\\
\end{array}\right.
\end{equation}
and
\begin{equation}
\left(\sum_{h=0}^{m+1-l}|r_{h+l}|\right)^{\alpha}\leq H_1(r;l)=\left\{\begin{array}{ll}
\left(\displaystyle\sum_{h=0}^{m+1-l}|r_{h+l}|\right)^{2},\qquad \textrm{si}\ \displaystyle\sum_{h=0}^{m+1-l}|r_{h+l}|>1\\\nonumber\\
\displaystyle\sum_{h=0}^{m+1-l}|r_{h+l}|,\qquad \textrm{si}\ \displaystyle\sum_{h=0}^{m+1-l}|r_{h+l}|\leq 1.\\\nonumber\\
\end{array}\right.
\end{equation}
Letting $\Gamma_1(r)=M~(m+1)H_1(r) + 2~M'\displaystyle\sum_{l=2}^{m+1} H_1(r;l)$, we get:
$$\left|\frac{\partial}{\partial\delta}f\left(r;\psi\right)\right| \leq 4~\Gamma_1(r)~g(r).$$
Using Assumption \ref{Assump3.4}, it is easy to show that:
$$\int\ldots\int \Gamma_1(r)~g(r)\ dr<\infty.$$
Hence $I_n(\psi)$ is of class $\mathcal{C}^1$ with respect to $\delta$.\\\\
$\bullet$ {\bf Let us show that $I_n(\psi)$ is of class $\mathcal{C}^1$ with respect to $\alpha$}.\\
Using the same approach as before, we show that $U(r;\ \psi)$ is differentiable with respect to $\alpha$. Thus we get:
\begin{eqnarray}
\left|\frac{\partial~U(r;\psi)}{\partial\alpha}\right|&=& \left|\frac{\partial}{\partial\alpha}S(r;\ \psi) + \frac{\partial}{\partial\alpha}\sum_{l=2}^{m+1}\left|\sum_{h=0}^{m+1-l}r_{h+l}c_{h}\right|^{\alpha}\right|\nonumber\\
&=&\left|\frac{\partial}{\partial\alpha}S(r;\ \psi) + \sum_{l=2}^{m+1}\left|\sum_{h=0}^{m+1-l}r_{h+l}c_{h}\right|^{\alpha}\ln \left|\sum_{h=0}^{m+1-l}r_{h+l}c_{h}\right|\right|.\nonumber
\end{eqnarray}
Applying the H$\ddot{o}$lder's inequality and using the approach as before, we have:
\begin{eqnarray}
\left|\frac{\partial~U(r;\psi)}{\partial\alpha}\right|&\leq& (m+1)^2~H_2(r)\sum_{j=0}^{+\infty}\left|c_{j+l_0}\right|^{2} + \sum_{l=2}^{m+1}\left[ \left|\sum_{h=0}^{m+1-l}r_{l+h}c_{h}\right|^{\alpha + 1} \right]\nonumber\\
&\leq& M~(m+1)^2~H_2(r)+ \sum_{l=2}^{m+1} H_2(r;l):=\Gamma_2(r) \nonumber
\end{eqnarray}
where
\begin{equation}
H_2(r)=\left\{\begin{array}{ll}
\left(\displaystyle\sum_{l=0}^{m}|r_{l+1}|\right)^{3},\qquad \textrm{si}\ \displaystyle\sum_{l=0}^{m}|r_{l+1}|>1\\\nonumber\\
\left(\displaystyle\sum_{l=0}^{m}|r_{l+1}|\right)^2,\qquad \textrm{si}\ \displaystyle\sum_{l=0}^{m}|r_{l+1}|\leq 1,\\\nonumber\\                                                  
\end{array}\right.
\end{equation}
and
\begin{equation}
 H_2(r;l)=\left\{\begin{array}{ll}
\left(\displaystyle\sum_{h=0}^{m+1-l}|r_{h+l}|\right)^{3},\qquad \textrm{si}\ \displaystyle\sum_{h=0}^{m+1-l}|r_{h+l}|>1\\\nonumber\\
\left(\displaystyle\sum_{h=0}^{m+1-l}|r_{h+l}|\right)^2,\qquad \textrm{si}\ \displaystyle\sum_{h=0}^{m+1-l}|r_{h+l}|\leq 1.\\\nonumber~~~~~~~~~~~~~~~~~~~~~~~~~~~\\
\end{array}\right.
\end{equation}
Thus we obtain:
$$\left|\frac{\partial f\left(r;\psi\right)}{\partial\alpha}\right|\leq 4~\Gamma_2(r)~g(r),$$
and then using Assumption \ref{Assump3.4}, we have $\displaystyle\int\ldots\int \Gamma_2(r)~g(r)\ dr<\infty$, and hence $I_n(\psi)$ is of class $\mathcal{C}^1$ with respect to $\alpha$.\\\\ Now we will show that $I_n(\psi)$ is of class $\mathcal{C}^2$ with respect to $\psi$. Thus we get:
\begin{eqnarray}
\left|\frac{\partial^2}{\partial\psi\partial\psi'}f\left(r;\psi\right)\right|&=&2~g(r)\left|\frac{\partial~c(r;\psi)}{\partial\psi}\right|\left|\frac{\partial~c(r;\psi)}{\partial\psi'}\right|\nonumber\\
                             &+&2~g(r)\left|c_{n}(r)-c(r;\psi)\right|\left|\frac{\partial^2~c(r;\psi)}{\partial\psi\partial\psi'}\right|\nonumber\\                 &\leq&2~g(r)\left|\frac{\partial~c(r;\psi)}{\partial\psi}\right| \left|\frac{\partial~c(r;\psi)}{\partial\psi'}\right|+4g(r)\left|\frac{\partial^2~c(r;\psi)}{\partial\psi\partial\psi'}\right|\nonumber\\
     &\leq&2~g(r)\left|\frac{\partial~U(r;\psi)}{\partial\psi}\right| \left|\frac{\partial~U(r;\psi)}{\partial\psi'}\right|\nonumber\\
                  &+& 4~g(r)\left[\left|\frac{\partial~U(r;\psi)}{\partial\psi}\right| \left|\frac{\partial~U(r;\psi)}{\partial\psi'}\right| + \left|\frac{\partial^2~U(r;\psi)}{\partial\psi\partial\psi'}\right|\right]\nonumber\\
    &\leq& 6~g(r)\left|\frac{\partial~U(r;\psi)}{\partial\psi}\right| \left|\frac{\partial~U(r;\psi)}{\partial\psi'}\right| + 4~g(r)\left|\frac{\partial^2~U(r;\psi)}{\partial\psi\partial\psi'}\right|.\nonumber
\end{eqnarray}
$\bullet$ {\bf Let us show that $I_n(\psi)$ is of class $\mathcal{C}^2$ with respect to $\alpha$}.\\
According to the previous calculations, we have:
$$\left|\frac{\partial^2}{\partial\alpha\partial\alpha'}f\left(r;\psi\right)\right|\leq6~g(r)\left[\Gamma_2(r)\right]^2 + 4~g(r)\left|\frac{\partial^2~U(r;\psi)}{\partial\alpha\partial\alpha'}\right|.$$
Using the same remarks before, we can show that $U(r;\ \psi)$ is twice differentiable with respect to $\alpha$, and we have:
\begin{eqnarray}
\left|\frac{\partial^2~U(r;\psi)}{\partial\alpha\partial\alpha'}\right|&=& \left|\frac{\partial^2}{\partial\alpha\partial\alpha'}S(r;\ \psi) + \frac{\partial}{\partial\alpha'}\sum_{l=2}^{m+1}\left|\sum_{h=0}^{m+1-l}r_{h+l}c_{h}\right|^{\alpha'}\right|\nonumber\\
&\leq& M~(m+1)^3~H_3(r) + \sum_{l=2}^{m+1}\left[ \left|\sum_{h=0}^{m+1-l}r_{l+h}c_{h}\right|^{\alpha' + 2} \right]\nonumber\\
&\leq& M~(m+1)^3~H_3(r) + \sum_{l=2}^{m+1} H_3(r;l):=\Gamma_3(r),\nonumber
\end{eqnarray}
where
\begin{equation}
H_3(r)=\left\{\begin{array}{ll}
\left(\displaystyle\sum_{l=0}^{m}|r_{l+1}|\right)^{4},\qquad \textrm{si}\ \displaystyle\sum_{l=0}^{m}|r_{l+1}|>1\\\nonumber\\
\left(\displaystyle\sum_{l=0}^{m}|r_{l+1}|\right)^3,\qquad \textrm{si}\ \displaystyle\sum_{l=0}^{m}|r_{l+1}|\leq 1.\\\nonumber\\
\end{array}\right.
\end{equation}
and
\begin{equation}
 H_3(r;l)=\left\{\begin{array}{ll}
\left(\displaystyle\sum_{h=0}^{m+1-l}|r_{h+l}|\right)^{4},\qquad \textrm{si}\ \displaystyle\sum_{h=0}^{m+1-l}|r_{h+l}|>1\\\nonumber\\
\left(\displaystyle\sum_{h=0}^{m+1-l}|r_{h+l}|\right)^3,\qquad \textrm{si}\ \displaystyle\sum_{h=0}^{m+1-l}|r_{h+l}|\leq 1.\\\nonumber\\
\end{array}\right.
\end{equation}
Thus
$$\left|\frac{\partial^2}{\partial\alpha\partial\alpha'}f\left(r;\psi\right)\right|\leq\left[6~\Gamma_2^2(r)+4~\Gamma_3(r)\right]g(r).$$
According to Assumption \ref{Assump3.4}, we get:
$$\int\ldots\int \left[6~\Gamma_2^2(r)+4~\Gamma_3(r)\right]g(r)\ dr<\infty$$
and then $I_n(\psi)$ is of class $\mathcal{C}^2$ with respect to $\alpha$.\\\\
$\bullet$ {\bf Let us show that $I_n(\psi)$ is of class $\mathcal{C}^2$ with respect to $\delta$}.\\
Using the same approach as before we have:
$$\left|\frac{\partial^2}{\partial\delta\partial\delta'}f\left(r;\psi\right)\right|\leq6~g(r)\Gamma_1^2(r) + 4~g(r)\left|\frac{\partial^2~U(r;\psi)}{\partial\delta\partial\delta'}\right|.$$
Since $U(r;\ \psi)$ is twice differentiable with respect to $\delta$, we get:
\begin{eqnarray}
\left|\frac{\partial^2~U(r;\psi)}{\partial\delta\partial\delta'}\right|&=&\frac{\partial^2~S(r;\psi)}{\partial\delta\partial\delta'}+\alpha\frac{\partial}{\partial\delta'}\sum_{l=2}^{m+1}\left|\sum_{h=0}^{m+1-l}r_{h+l}c_{h}\right|^{\alpha-1}\left|\sum_{h=0}^{m+1-l}r_{h+l}\frac{\partial~c_h}{\partial\delta}\right|\nonumber\\
&\leq&\sum_{j=0}^{\infty}\left|\frac{\partial^2~u_j(r;\psi)}{\partial\delta\partial\delta'}\right|+\alpha\sum_{l=2}^{m+1}\left[\left|\sum_{h=0}^{m+1-l}r_{l+h}c_{h}\right|^{\alpha-1}\left|\sum_{h=0}^{m+1-l}r_{l+h}\frac{\partial^2~c_{h}}{\partial\delta\partial\delta'}\right|\right]\nonumber\\
&+&\alpha(\alpha-1)\sum_{l=2}^{m+1}\left[\left|\sum_{h=0}^{m+1-l}r_{l+h}c_{h}\right|^{\alpha-2}\left|\sum_{h=0}^{m+1-l}r_{l+h}\frac{\partial~c_{h}}{\partial\delta}\right|\left|\sum_{h=0}^{m+1-l}r_{l+h}\frac{\partial~c_{h}}{\partial\delta'}\right|\right]\nonumber\\
&\leq&\frac{M~(m+1)}{2}\left(\sum_{l=0}^{m}r_{l+1}\right)^{\alpha}+ B_1 +  B_2\nonumber
\end{eqnarray}
Using the same approach as before, we can compute $B_1$ and $B_2$ as follow:
\begin{eqnarray}
B_1&=&\alpha\sum_{l=2}^{m+1}\left[\left|\sum_{h=0}^{m+1-l}r_{l+h}c_{h}\right|^{\alpha-1}\left|\sum_{h=0}^{m+1-l}r_{l+h}b_h\right|\right]\nonumber\\
&\leq& M_1\sum_{l=2}^{m+1}\left(\sum_{h=0}^{m+1-l}|r_{h+l}|\right)^{\alpha}\nonumber
\end{eqnarray}
and
$$ B_2\leq M_2\sum_{l=2}^{m+1}\left(\sum_{h=0}^{m+1-l}|r_{h+l}|\right)^{\alpha}.$$
Finally we get:
\begin{eqnarray}
\left|\frac{\partial^2~U(r;\psi)}{\partial\delta\partial\delta'}\right|&\leq& \frac{M~(m+1)}{2}\left(\sum_{l=0}^{m}r_{l+1}\right)^{\alpha}+(M_1+M_2)\sum_{l=2}^{m+1}\left(\sum_{h=0}^{m+1-l}|r_{h+l}|\right)^{\alpha}\nonumber\\
&\leq& \frac{M~(m+1)}{2}~H_1(r) + (M_1+M_2)\sum_{l=2}^{m+1} H_1(r;l)=\Gamma_1(r),\nonumber
\end{eqnarray}
and then
$$ \left|\frac{\partial^2}{\partial\delta\partial\delta'}f\left(r;\psi\right)\right|\leq6~g(r)\Gamma_1^2(r) + 4~g(r)\Gamma_1(r).$$
According to Assumption \ref{Assump3.4}, we have $\displaystyle\int\ldots\int \left(6\Gamma_1^2(r) + 4\Gamma_1(r)\right)g(r) dr<\infty$. Thus $I_n(\psi)$ is of class $\mathcal{C}^2$ with respect to $\delta$.\\\\
In short, $I_n(\psi)$ is twice continuously differentiable under the integral sign with respect to $\psi$.
\item[$(A_3$):] The sequence $\left(X_t\right)_{t\in\mathbb{Z}}$ is strictly stationary and ergodic.\\
According to Diongue {\it et al.} \cite{DDM}, the process $\left(X_t\right)_{t\in\mathbb{Z}}$ is strictly stationary and has a unique moving average representation given by:
$$X_t = \sum_{j=0}^{\infty} c_j Z_{t-j},$$
where the sequence $\left(Z_t\right)_{t\in\mathbb{Z}}$ is independently and identically distributed and $\sum_{j=0}^{\infty}\left|c_j\right|^{\alpha}<\infty$, for all $1<\alpha\leq 2$. Hence $\left(X_t\right)_{t\in\mathbb{Z}}$ is an ergodic process. Consequently, the sequence $\left(X_t\right)_{t\in\mathbb{Z}}$ is strictly stationary and ergodic.
\item[$(A_5$):] $K(x; \psi)$ is a measurable function of $x$ for all $\psi$ and bounded, where
$$K(x;\ \psi)=\int\ldots\int\left(\cos(r'x)-c(r;\psi)\right)\frac{\partial}{\partial\psi}~c(r;\psi) g(r)\ dr.$$
We get:
\begin{eqnarray}
\left|\left(\cos(r'x)-c(r;\psi)\right)\frac{\partial}{\partial\psi}~c(r;\psi) g(r)\right|&\leq& 2g(r)\left|\frac{\partial}{\partial\psi}~c(r;\psi)\right|\nonumber\\
&\leq& 2g(r)\left|\frac{\partial}{\partial\psi}~U(r;\psi)\right|.\nonumber
\end{eqnarray}
Using the previous remarks, we obtain:
\begin{equation}
\left|\frac{\partial}{\partial\psi}~U(r;\psi)\right|\leq\Gamma(r)=\left\{\begin{array}{ll}
\Gamma_1(r),\qquad \textrm{si}\ \psi=(d,\ D,\ \phi,\ \theta,\ \Phi,\ \Theta)\\\nonumber\\
\Gamma_2(r),\qquad \textrm{si}\ \psi=\alpha.\\
\end{array}\right.
\end{equation}
Thus
$$\left|\left(\cos(r'x)-c(r;\psi)\right)\frac{\partial}{\partial\psi}~c(r;\psi) g(r)\right|\leq 2\Gamma(r)g(r),\quad\forall~\psi\in\Psi.$$
Using Assumption \ref{Assump3.4}, we have $\displaystyle\int\ldots\int \Gamma(r)g(r)\ dr<\infty$, and hence $K(x;\psi)$ is a measurable function of $x$ for all $\psi$ and bounded.
\item[$(A_6$):] $B(\psi_0)=\displaystyle\int\ldots\int\frac{\partial}{\partial\psi}~c(r;\psi_0)\frac{\partial}{\partial\psi'}~c(r;\psi_0)g(r)\ dr$ is nonsingular and $\displaystyle\frac{\partial^2}{\partial\psi\partial\psi'}~c(r;\psi)$ is uniformly bounded by a g-integrable function over $\Psi$.\\\\
We get:
$$\left|\frac{\partial}{\partial\psi}~c(r;\psi_0)\frac{\partial}{\partial\psi'}~c(r;\psi_0)\right|\leq \left|\frac{\partial}{\partial\psi}~U(r;\psi_0)\frac{\partial}{\partial\psi'}~U(r;\psi_0)\right|\leq \Gamma^2(r)$$
According to Assumption \ref{Assump3.4}, we have $\displaystyle\int\ldots\int \Gamma^2(r)g(r)\ dr<\infty$, and then
$$\displaystyle\int\ldots\int \left|\frac{\partial}{\partial\psi}~c(r;\psi_0)\frac{\partial}{\partial\psi'}~c(r;\psi_0)\right|g(r)\ dr<\infty.$$
Hence $B(\psi_0)$ is nonsingular. On the other hand, remark that:
\begin{eqnarray}
\left|\frac{\partial^2~c(r;\psi)}{\partial\psi\partial\psi'}\right|&\leq& \left|\frac{\partial~U(r;\psi)}{\partial\psi}\right| \left|\frac{\partial~U(r;\psi)}{\partial\psi'}\right| + \left|\frac{\partial^2~U(r;\psi)}{\partial\psi\partial\psi'}\right|\nonumber\\
&\leq& \Gamma^2(r)+\left|\frac{\partial^2~U(r;\psi)}{\partial\psi\partial\psi'}\right|.\nonumber
\end{eqnarray}
Now we have:
\begin{equation}
\left|\frac{\partial^2~U(r;\psi)}{\partial\psi\partial\psi'}\right|<\Gamma'(r)=\left\{\begin{array}{ll}
\Gamma_1(r),\qquad \textrm{si}\ \psi=(d,\ D,\ \phi,\ \theta,\ \Phi,\ \Theta)\\\nonumber\\
\Gamma_3(r),\qquad \textrm{si}\ \psi=\alpha.\\
\end{array}\right.
\end{equation}
This implies that: $$\left|\frac{\partial^2~c(r;\psi)}{\partial\psi\partial\psi'}\right|\leq \Gamma^2(r)+\Gamma'(r),$$
and using Assumption \ref{Assump3.4}, we get:
$$\displaystyle\int\ldots\int (\Gamma^2(r)+\Gamma'(r))g(r)\ dr<\infty.$$
Hence $\left|\displaystyle\frac{\partial^2~c(r;\psi)}{\partial\psi\partial\psi'}\right|$ is uniformly bounded by a g-integrable function over $\Psi$.
\end{enumerate}
Thus all conditions are verified, then we can apply the Theorem 2.1 of Knight and Yu \cite{KY} to achieve the proof.
\hspace{\stretch{1}}\rule{1ex}{1ex}

\end{document}